\newtheorem{thm}{Theorem}
\newtheorem{lem}[thm]{Lemma}
\newtheorem{prop}[thm]{Proposition}
\theoremstyle{definition}
\newtheorem{remark}{Remark}
\xpatchcmd{\proof}{\itshape}{\normalfont\proofnameformat}{}{}
\newcommand{\proofnameformat}{}
\begin{document}

\renewcommand{\proofnameformat}{\bfseries}

\begin{center}
{\Large\textbf{Optimal and typical $L^2$ discrepancy of 2-dimensional lattices}}

\vspace{10mm}

\textbf{Bence Borda}

{\footnotesize Graz University of Technology

Steyrergasse 30, 8010 Graz, Austria

Email: \texttt{borda@math.tugraz.at}}

\vspace{5mm}

{\footnotesize \textbf{Keywords:} continued fraction, quadratic irrational, rational lattice,\\symmetrization, low discrepancy, limit distribution}

{\footnotesize \textbf{Mathematics Subject Classification (2020):} 11K38, 11J83}
\end{center}

\vspace{5mm}

\begin{abstract}
We undertake a detailed study of the $L^2$ discrepancy of rational and irrational $2$-dimensional lattices either with or without symmetrization. We give a full characterization of lattices with optimal $L^2$ discrepancy in terms of the continued fraction partial quotients, and compute the precise asymptotics whenever the continued fraction expansion is explicitly known, such as for quadratic irrationals or Euler's number $e$. In the metric theory, we find the asymptotics of the $L^2$ discrepancy for almost every irrational, and the limit distribution for randomly chosen rational and irrational lattices.
\end{abstract}

\section{Introduction}

The $L^2$ discrepancy of a finite point set $P \subset [0,1)^2$ in the unit square is defined as
\[ D_2(P) = \left( \int_{[0,1]^2} \left( B(x,y) -|P|xy \right)^2 \, \mathrm{d} x \, \mathrm{d} y \right)^{1/2} , \]
where $B(x,y)=|P \cap ([0,x) \times [0,y))|$ is the number of points of $P$ which fall in the rectangle $[0,x) \times [0,y)$. The $L^2$ discrepancy is a common measure of equidistribution, with direct applications to numerical integration; for a general introduction we refer to the monograph Drmota--Tichy \cite{DT}. A seminal result of K.\ Roth \cite{RO} states that every finite point set $P$ satisfies $D_2(P) \gg \sqrt{\log |P|}$ with a universal implied constant. This is known to be sharp, with several explicit constructions e.g.\ based on digital nets attaining the optimal order $D_2(P) \ll \sqrt{\log |P|}$, see \cite{DP}.

In this paper we undertake a detailed study of the $L^2$ discrepancy of $2$-dimensional lattices. Given $\alpha \in \mathbb{R}$ and $N \in \mathbb{N}$, we will consider the $N$-element set
\[ L(\alpha, N) = \left\{ \left( \{ n \alpha \}, \frac{n}{N} \right) \in [0,1)^2 \, : \, 0 \le n \le N-1 \right\} , \]
where $\{ \cdot \}$ denotes fractional part, and the $2N$-element set
\[ S(\alpha, N) = \left\{ \left( \{ \pm n \alpha \}, \frac{n}{N} \right) \in [0,1)^2 \, : \, 0 \le n \le N-1 \right\} . \]
Note that $L(\alpha, N)$ is the intersection of the unit square $[0,1)^2$ and the lattice spanned by the vectors $(\alpha, 1/N)$ and $(1,0)$. We call $S(\alpha, N)$ the symmetrization of $L(\alpha, N)$; more precisely, $S(\alpha, N)$ is the union of $L(\alpha, N)$ and its reflection about the vertical line $x=1/2$. We study both rational and irrational values of $\alpha$.

The equidistribution properties of $S(\alpha, N)$ and $L(\alpha, N)$, in particular their $L^2$ discrepancy, are closely related to the Diophantine approximation properties of $\alpha$. Throughout this paper, $\alpha=[a_0;a_1,a_2,\dots]$ will denote the (finite or infinite) continued fraction expansion of $\alpha$, and $p_k/q_k=[a_0;a_1,\dots, a_k]$ its convergents. In the rational case it will not matter which of the two possible expansions is chosen. Roughly speaking, we will show that for $N \approx q_K$,
\[ D_2^2 (S(\alpha, N)) \approx \sum_{k=1}^K a_k^2 \quad \textrm{and} \quad D_2^2(L(\alpha, N)) \approx \sum_{k=1}^K a_k^2 + \left( \sum_{k=1}^K (-1)^k a_k \right)^2 . \]
See Propositions \ref{parsevalprop} and \ref{simpleparsevalprop} below for a precise formulation.

Our first result characterizes all irrationals for which $S(\alpha, q_K)$ resp.\ $L(\alpha, q_K)$ attains optimal $L^2$ discrepancy as $K \to \infty$. We also consider the same problem for $S(\alpha, N)$ and $L(\alpha, N)$ as $N \to \infty$. The first equivalence below generalizes a result of Davenport \cite{DA}, who showed that $S(\alpha, N)$ attains optimal $L^2$ discrepancy whenever $\alpha$ is badly approximable, i.e.\ $a_k \ll 1$.
\begin{thm}\label{optimalirrationaltheorem} Let $\alpha =[a_0;a_1,a_2, \dots]$ be irrational. We have
\[ \begin{split} D_2 (S(\alpha,N)) \ll \sqrt{\log N} \,\, &\Longleftrightarrow \,\, D_2 (S(\alpha,q_K)) \ll \sqrt{\log q_K} \,\, \Longleftrightarrow \,\,  \frac{1}{K} \sum_{k=1}^K a_k^2 \ll 1 , \\ D_2 (L(\alpha,q_K)) \ll \sqrt{\log q_K} \,\, &\Longleftrightarrow \,\, \frac{1}{K} \sum_{k=1}^K a_k^2 \ll 1 \textrm{ and } \frac{1}{\sqrt{K}} \left| \sum_{k=1}^K (-1)^k a_k \right| \ll 1. \end{split} \]
\end{thm}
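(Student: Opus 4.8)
The plan is to reduce both chains of equivalences to purely arithmetic statements about the partial quotients by invoking the Parseval-type formulas of Propositions \ref{parsevalprop} and \ref{simpleparsevalprop}, and then to prove those arithmetic statements by elementary estimates on the convergent denominators. The engine is the classical two-sided bound $\prod_{k=1}^K a_k \le q_K \le \prod_{k=1}^K (a_k+1) \le 2^K \prod_{k=1}^K a_k$ together with $q_K \ge F_{K+1}$, which yield
\[ \sum_{k=1}^K \log a_k \;\le\; \log q_K \;\le\; \sum_{k=1}^K \log a_k + K\log 2, \qquad \log q_K \gg K . \]
From this I would extract the key equivalence $\frac1K\sum_{k=1}^K a_k^2 \ll 1 \Leftrightarrow \sum_{k=1}^K a_k^2 \ll \log q_K$. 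The forward direction is immediate from $\log q_K \gg K$. For the converse, if $\sum_{k=1}^K a_k^2 \le C\log q_K$ for all $K$, I would feed in the elementary inequality $C\log a \le \tfrac12 a^2 + D_C$ (valid for $a\ge 1$ with $D_C=\max\{0,\tfrac{C}{2}(\log C-1)\}$, obtained by minimizing $\tfrac12 a^2 - C\log a$), summing to get $\sum a_k^2 \le C\sum\log a_k + CK\log 2 \le \tfrac12\sum a_k^2 + (D_C+C\log2)K$, hence $\sum_{k=1}^K a_k^2 \le 2(D_C+C\log2)K$. The point is that the logarithmic growth of $\log q_K$ cannot keep pace with a quadratic average of the $a_k$ unless that average is already bounded.

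Then the symmetrized chain closes as a cycle. The implication $D_2(S(\alpha,N))\ll\sqrt{\log N}\Rightarrow D_2(S(\alpha,q_K))\ll\sqrt{\log q_K}$ is a trivial restriction to the subsequence $N=q_K$. For $D_2(S(\alpha,q_K))\ll\sqrt{\log q_K}\Leftrightarrow \frac1K\sum a_k^2\ll1$, I would apply Proposition \ref{simpleparsevalprop}, which gives $D_2^2(S(\alpha,q_K))\asymp\sum_{k=1}^K a_k^2$; optimality is then exactly $\sum a_k^2\ll\log q_K$, equivalent to $\frac1K\sum a_k^2\ll1$ by the estimate above. To complete the cycle I must prove $\frac1K\sum a_k^2\ll1\Rightarrow D_2(S(\alpha,N))\ll\sqrt{\log N}$ for every $N$, and this is where the general-$N$ formula of Proposition \ref{parsevalprop} is needed. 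Writing $q_K\le N<q_{K+1}$ and using the Ostrowski digits of $N$ (which satisfy $0\le c_k\le a_{k+1}$), the contribution beyond $\sum_{k=1}^K a_k^2$ should be controlled by $\sum_k c_k^2 \le \sum_{k=1}^{K+1} a_k^2\ll K$; the same hypothesis forces $\log q_K\asymp K$ and $a_{K+1}\ll\sqrt K$, so that $\log N\asymp K$ and indeed $D_2^2(S(\alpha,N))\ll K\asymp\log N$.

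For the lattice chain I would again invoke Proposition \ref{simpleparsevalprop}, now in the form $D_2^2(L(\alpha,q_K))\asymp\sum_{k=1}^K a_k^2 + \big(\sum_{k=1}^K(-1)^k a_k\big)^2$. Since both summands are nonnegative, optimality $D_2^2(L(\alpha,q_K))\ll\log q_K$ is equivalent to the two separate bounds $\sum a_k^2\ll\log q_K$ and $\big(\sum(-1)^k a_k\big)^2\ll\log q_K$. The first is $\frac1K\sum a_k^2\ll1$ by the estimate above, and once it holds we have $\log q_K\asymp K$, turning the second into $\big(\sum(-1)^k a_k\big)^2\ll K$, i.e.\ $\frac{1}{\sqrt K}\big|\sum_{k=1}^K(-1)^k a_k\big|\ll1$; conversely these two bounds give both quantities $\ll K\ll\log q_K$. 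I expect the main obstacle to be the final implication in the symmetrized chain, namely deducing $D_2(S(\alpha,N))\ll\sqrt{\log N}$ for all $N$ from $\frac1K\sum a_k^2\ll1$: unlike the clean subsequence $N=q_K$, it requires the full strength of Proposition \ref{parsevalprop} and careful bookkeeping of the Ostrowski expansion of an arbitrary $N$, together with the observation that the hypothesis simultaneously tames both the main term and the logarithm $\log N$.
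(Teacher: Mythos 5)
Your proposal is correct and follows essentially the same route as the paper: a cycle of implications driven by Proposition \ref{simpleparsevalprop}, with the arithmetic pivot $\sum_{k=1}^K a_k^2 \ll \log q_K \Leftrightarrow \sum_{k=1}^K a_k^2 \ll K$ established by an elementary inequality (the paper uses $\log q_K \le \sum_{k=1}^K\log(a_k+1) \ll \sum_{k=1}^K a_k \le \sqrt{K\sum_{k=1}^K a_k^2}$ where you use $C\log a \le \tfrac12 a^2 + D_C$; both work). The only remark worth making is that the step you flag as the main obstacle, namely $\tfrac1K\sum_{k=1}^K a_k^2 \ll 1 \Rightarrow D_2(S(\alpha,N)) \ll \sqrt{\log N}$ for arbitrary $N$, requires no Ostrowski bookkeeping and no appeal to Proposition \ref{parsevalprop}: the upper bound $D_2^2(S(\alpha,N)) \ll \sum_{k=1}^K a_k^2$ in Proposition \ref{simpleparsevalprop} is already stated for every $q_{K-1}\le N\le q_K$, and $\log N \gg K$ follows from $N \ge q_{K-1} \ge F_K$.
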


\begin{remark}\label{LalphaNremark}
We also give an almost complete answer for the unsymmetrized lattice $L(\alpha, N)$ with general $N$: under the assumption $a_k \ll \sqrt{k}/\log^2 k$, we have
\[ D_2 (L(\alpha,N)) \ll \sqrt{\log N} \,\, \Longleftrightarrow \,\, \frac{1}{K} \sum_{k=1}^K a_k^2 \ll 1 \textrm{ and } \frac{1}{\sqrt{K}} \left| \sum_{k=1}^K (-1)^k a_k \right| \ll 1. \]
In the special case of a badly approximable $\alpha$, this equivalence was observed in \cite{BI,BTY2}. Note that $K^{-1} \sum_{k=1}^K a_k^2 \ll 1$ implies that $a_k \ll \sqrt{k}$; we do not know whether the slightly stronger extra assumption $a_k \ll \sqrt{k}/\log^2 k$ can be removed.
\end{remark}

More precise results can be deduced for an irrational $\alpha$ whose continued fraction expansion is explicitly known. The most interesting case is that of quadratic irrationals, whose continued fractions are of the form $\alpha=[a_0;a_1,\dots, a_r,\overline{a_{r+1}, \dots, a_{r+p}}]$, where the overline denotes the period. Note that in this case $\sum_{k=1}^K (-1)^k a_k = A(\alpha) K +O(1)$ with some constant $A(\alpha)$. In fact, $A(\alpha)=0$ if $p$ is odd, and $A(\alpha)=p^{-1} \sum_{k=1}^p (-1)^{r+k} a_{r+k}$ (possibly zero) if $p$ is even. We also have $\log q_K=\Lambda(\alpha) K+O(1)$ with some constant $\Lambda(\alpha )>0$. In fact, $\Lambda(\alpha) = p^{-1} \log \eta$, where $\eta>1$ is the larger of the two eigenvalues of the matrix
\[ \left( \begin{array}{cc} 0 & 1 \\ 1 & a_{r+1} \end{array} \right) \left( \begin{array}{cc} 0 & 1 \\ 1 & a_{r+2} \end{array} \right) \cdots \left( \begin{array}{cc} 0 & 1 \\ 1 & a_{r+p} \end{array} \right) . \]
\begin{thm}\label{quadraticirrationaltheorem} Let $\alpha$ be a quadratic irrational, and let $A(\alpha)$ and $\Lambda (\alpha)$ be as above. There exists a constant $c(\alpha)>0$ such that
\[ D_2^2(S(\alpha, N)) = c(\alpha) \log N +O(1) , \]
and
\[ D_2^2(L(\alpha, N)) = \left\{ \begin{array}{ll} \frac{3}{2} c(\alpha) \log N + O((\log \log N)^4) & \textrm{if } A(\alpha)=0, \\ \frac{A(\alpha)^2}{144 \Lambda(\alpha )^2} \log^2 N + O(\log N) & \textrm{if } A(\alpha) \neq 0 . \end{array} \right. \]
The implied constants depend only on $\alpha$.
\end{thm}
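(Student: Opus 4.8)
The plan is to reduce everything to the exact Parseval-type identities of Propositions~\ref{parsevalprop} and~\ref{simpleparsevalprop}, which express $D_2^2(S(\alpha,N))$ and $D_2^2(L(\alpha,N))$ through the partial quotients. Up to the precise constants hidden in the symbol $\approx$, these read as a main term proportional to $\sum_{k=1}^K a_k^2$ in the symmetrized case, and in the unsymmetrized case the same term (with a different constant) together with a contribution governed by the alternating sum $\sum_{k=1}^K (-1)^k a_k$. The theorem is therefore an exercise in evaluating these two continued-fraction sums for a quadratic irrational and in propagating the error terms, uniformly in $N$, through the dictionary $K \leftrightarrow \log N$.

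The first step is to use the eventual periodicity $a_{k+p}=a_k$ ($k>r$). Both sums then become partial sums of periodic sequences and are evaluated up to $O(1)$: one finds $\sum_{k=1}^K a_k^2 = \sigma K + O(1)$ with $\sigma = p^{-1}\sum_{j=1}^p a_{r+j}^2>0$, while $\sum_{k=1}^K (-1)^k a_k = A(\alpha) K + O(1)$, the error term here being genuinely bounded exactly when $A(\alpha)=0$. To convert the index $K$ into $\log N$ I would use the stated relation $\log q_K = \Lambda(\alpha) K + O(1)$, with $\Lambda(\alpha)=p^{-1}\log\eta$ coming from the dominant eigenvalue of the period matrix. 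Crucially, the partial quotients are bounded, so $q_{K+1}/q_K \asymp 1$; hence for every $N$ with $q_K \le N < q_{K+1}$ we have $\log N = \log q_K + O(1)$ and $K = \Lambda(\alpha)^{-1}\log N + O(1)$. This is exactly what allows the passage from convergent denominators to arbitrary $N$ at the cost of a bounded error.

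Assembling these pieces, the symmetrized estimate drops out with $c(\alpha)$ equal to the product of the Parseval constant, $\sigma$ and $\Lambda(\alpha)^{-1}$, which is positive since every $a_k\ge1$. For $L(\alpha,N)$ I would split on $A(\alpha)$. When $A(\alpha)\ne0$ the squared alternating sum is $\asymp A(\alpha)^2\Lambda(\alpha)^{-2}\log^2 N$ and swamps the $\log N$-size contributions; the remaining work is to extract the exact numerical factor, which I expect to emerge as $1/144=(1/12)^2$ from the Parseval weight of the antisymmetric Fourier mode (a squared Bernoulli integral of $x-1/2$), yielding $\frac{A(\alpha)^2}{144\Lambda(\alpha)^2}\log^2 N$ with the lower-order cross terms and the $O(1)$ fluctuation of the alternating sum swept into $O(\log N)$. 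When $A(\alpha)=0$ the alternating part is bounded and the leading behaviour reverts to the $\sum a_k^2$ term carrying the symmetrization factor $\tfrac{3}{2}$, producing $\tfrac{3}{2} c(\alpha)\log N$.

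I expect the genuine difficulty to be the error term $O((\log\log N)^4)$ in the case $A(\alpha)=0$. Here the dominant alternating term cancels, so the size of the correction is dictated by the finer terms of the Parseval expansion that normally sit below the main term: the partial alternating sums $\sum_{j\le k}(-1)^j a_j$, their interaction with the symmetric part, and the dependence on the Ostrowski digits of $N$. Although each of these is bounded, they must be summed with their natural weights across the $\asymp\log N$ continued-fraction scales, and it is the precise bookkeeping of this accumulation --- showing it grows no faster than a fixed power of $\log K = \log\log N + O(1)$ rather than proportionally to $K$ --- that will constitute the technical heart of the proof. The same estimates, re-examined in the regime $A(\alpha)\ne0$, must then be shown to degrade only to $O(\log N)$, which is comfortably below the $\log^2 N$ main term.
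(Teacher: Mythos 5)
There is a genuine gap at the central step of your argument for $D_2^2(S(\alpha,N))=c(\alpha)\log N+O(1)$. You propose to pass from the Parseval main term to $\sum_{k\le K}a_k^2=\sigma K+O(1)$ and read off $c(\alpha)$ as (Parseval constant)$\cdot\sigma\cdot\Lambda(\alpha)^{-1}$. But the conversion of the Diophantine sum into partial quotients, formula \eqref{diophantineevaluation}, reads $\sum_{m<q_K}m^{-2}\|m\alpha\|^{-2}=\frac{\pi^4}{90}\sum_{k\le K}a_k^2\pm 152\sum_{k\le K}a_k$, and for a quadratic irrational the error $\sum_{k\le K}a_k\asymp K\asymp\log N$ is of the \emph{same order} as the main term. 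So this route can only give $D_2^2(S(\alpha,N))\asymp\log N$, never an asymptotic with $O(1)$ error, and the constant it suggests is wrong: for $\alpha=\frac{1+\sqrt5}{2}$ your assembly yields $1/(360\log\frac{1+\sqrt5}{2})$ whereas the true value is $1/(30\sqrt5\log\frac{1+\sqrt5}{2})$. The paper instead keeps the sum $\sum_{m<q_K}\frac{1}{4\pi^4m^2\|m\alpha\|^2}$ intact and invokes Beck's theorem \eqref{diophantinesumbeck}, that for quadratic irrationals this sum equals $c(\alpha)\log M+O(1)$ with $c(\alpha)$ tied to the arithmetic of $\mathbb{Q}(\alpha)$; the paper explicitly warns that $c(\alpha)$ admits no simple expression in the partial quotients. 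Periodicity of the $a_k$ alone cannot produce this.

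A second, smaller gap concerns $L(\alpha,N)$. The factor $\tfrac32$ and the $O((\log\log N)^4)$ error in the case $A(\alpha)=0$ come from the quoted variance formula \eqref{Tnvariance}: $N^{-1}\sum_{n<N}(T_n-E_N)^2$ equals \emph{half} the same Diophantine sum up to $O(\max a_k^2\cdot(\log\log N)^4)$, which for bounded partial quotients is $O((\log\log N)^4)$. You correctly flag this accumulation as the technical heart, but it is the content of the separate paper \cite{BO1} and is simply cited here, not rederived; your sketch gives no actual mechanism for bounding it by a power of $\log K$. Your treatment of the $A(\alpha)\neq0$ case is essentially right: there the $\log N$-size errors are harmless, $E_N=\frac{1}{12}\sum_{k\le K}(-1)^ka_k+O(\max a_k)$ from \eqref{EN} gives $E_N^2=\frac{A(\alpha)^2}{144\Lambda(\alpha)^2}\log^2N+O(\log N)$, and the $1/144=(1/12)^2$ arises exactly as you guessed.
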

\noindent We proved the same result for $S(\alpha, N)$ with the slightly worse error term $O(\log \log N)$ in a previous paper \cite{BO2}. In contrast to $A(\alpha)$ and $\Lambda(\alpha)$, there seems to be no simple way to compute the value of $c(\alpha)$ directly from the continued fraction expansion. The latter constant first appeared in certain lattice point counting problems studied in detail by Beck \cite{BE1,BE2,BE3}, who showed that it is related to the arithmetic of the ring of algebraic integers of the real quadratic field $\mathbb{Q}(\alpha)$, and computed its explicit value for any quadratic irrational; for instance,
\[ c \left( \frac{1+\sqrt{5}}{2} \right) = \frac{1}{30 \sqrt{5} \log \frac{1+\sqrt{5}}{2}} \quad \textrm{and} \quad c(\sqrt{3}) = \frac{1}{12 \sqrt{3} \log (2+\sqrt{3})} . \]

Precise results also follow for non-badly approximable irrationals whose continued fraction expansions are explicitly known. Consider Euler's number $e=[2;1,2,1,1,4,1,\dots, 1,2n,1,\dots ]$ as an illustration. Since the ``period length'' is odd, the square of the alternating sum $(\sum_{k=1}^K (-1)^k a_k)^2 \ll K^2$ is negligible compared to $\sum_{k=1}^K a_k^2 = (4/81)K^3+O(K^2)$. Thus from our general results it easily follows that
\[ D_2 (S(e,N)) = \frac{1}{3\sqrt{30}} \left( \frac{\log N}{\log \log N} \right)^{3/2} \left( 1 + O \left( \frac{\log \log \log N}{\log \log N} \right) \right) , \]
and
\[ D_2 (L(e,N)) = \frac{1}{6\sqrt{5}} \left( \frac{\log N}{\log \log N} \right)^{3/2} \left( 1 + O \left( \frac{\log \log \log N}{\log \log N} \right) \right) . \]
In contrast, e.g.\ for $\tan 1 = [1;1,1,3,1,5,1,\dots, 2n-1,1, \dots ]$, the ``period length'' is even, and the alternating sum $(\sum_{k=1}^K (-1)^k a_k)^2=K^4/16+O(K^3)$ dominates $\sum_{k=1}^K a_k^2 = K^3/6 + O(K^2)$. Consequently,
\[ D_2 (S(\tan 1,N)) = \frac{1}{3\sqrt{30}} \left( \frac{\log N}{\log \log N} \right)^{3/2} \left( 1 + O \left( \frac{\log \log \log N}{\log \log N} \right) \right) , \]
but for the unsymmetrized lattice we have the larger order of magnitude
\[ D_2 (L(\tan 1, N)) = \frac{1}{12} \left( \frac{\log N}{\log \log N} \right)^2 \left( 1 + O \left( \frac{\log \log \log N}{\log \log N} \right) \right) . \]

We also establish precise results for randomly chosen $\alpha$, starting with the asymptotics a.e.\ in the sense of the Lebesgue measure.
\begin{thm}\label{aeasymptotictheorem} Let $\varphi$ be a positive nondecreasing function on $(0,\infty)$.
\begin{enumerate}
\item[(i)] If $\sum_{n=1}^{\infty} 1/\varphi(n) < \infty$, then for a.e.\ $\alpha$,
\[ \begin{split} D_2(S(\alpha, N)) &\le \varphi (\log N) + O(\log N \log \log N), \\ D_2(L(\alpha, N)) &\le \varphi (\log N) + O(\log N \log \log N) \end{split} \]
with implied constants depending only on $\alpha$ and $\varphi$.
\item[(ii)] If $\sum_{n=1}^{\infty} 1/\varphi (n) = \infty$, then for a.e.\ $\alpha$,
\[ D_2 (S(\alpha, N)) \ge \varphi (\log N) \quad \textrm{and} \quad D_2 (L(\alpha, N)) \ge \varphi (\log N) \quad \textrm{for infinitely many } N. \]
\end{enumerate}
\end{thm}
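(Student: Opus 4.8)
The plan is to translate both statements into purely Diophantine assertions about the partial quotients $a_k$ via Propositions \ref{parsevalprop} and \ref{simpleparsevalprop}, and then feed in three facts from the metric theory of continued fractions: L\'evy's theorem $\log q_K = (\Lambda_0 + o(1))K$ a.e.\ with $\Lambda_0 = \pi^2/(12 \log 2) > 1$; the Borel--Bernstein theorem (for positive nondecreasing $\psi$ one has $a_k \ge \psi(k)$ for infinitely many $k$ a.e.\ iff $\sum_k 1/\psi(k) = \infty$); and the trimmed strong law of Diamond and Vaaler, $\sum_{k=1}^K a_k - \max_{k \le K} a_k = O(K \log K)$ a.e. For $q_K \le N < q_{K+1}$ the cited propositions give $D_2(S(\alpha,N)) \asymp (\sum_{k=1}^K a_k^2)^{1/2}$ and $D_2(L(\alpha,N)) \asymp (\sum_{k=1}^K a_k^2 + (\sum_{k=1}^K (-1)^k a_k)^2)^{1/2}$, while L\'evy's theorem gives $K \asymp \log N$; crucially $\Lambda_0 > 1$ forces $K \le \log N$ for large $N$, which is exactly what lets us dominate $\varphi(K)$ by the larger quantity $\varphi(\log N)$, and it makes $K \log K = O(\log N \log \log N)$.

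For part (ii), fix $\varphi$ with $\sum 1/\varphi = \infty$. Since $D_2(S(\alpha,q_K)) \ge c_0 a_K$ and $D_2(L(\alpha,q_K)) \ge c_0 a_K$ for a suitable $c_0 = c_0(\alpha) > 0$ once $a_K$ is large, it suffices to find infinitely many $K$ with $c_0 a_K \ge \varphi(\log q_K)$. Using $\log q_K \le (\Lambda_0 + \varepsilon)K$ eventually, I would apply the divergence half of Borel--Bernstein to $\psi(K) = c_0^{-1}\varphi((\Lambda_0+\varepsilon)K)$; since $\varphi$ is nondecreasing, a rescaling/integral comparison shows $\sum_K 1/\psi(K) = \infty$, so $a_K \ge \psi(K)$ infinitely often, whence $c_0 a_K \ge \varphi((\Lambda_0+\varepsilon)K) \ge \varphi(\log q_K)$ for infinitely many $K$. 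Taking $N = q_K$ settles both lower bounds.

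For part (i), fix $\varphi$ with $\sum 1/\varphi < \infty$. Write $M^{(1)} = \max_{k \le K} a_k = a_{k^*}$ and $R = \sum_{k \ne k^*} a_k^2$, so that $(\sum a_k^2)^{1/2} \le M^{(1)} + \sqrt{R}$. The convergence half of Borel--Bernstein gives $M^{(1)} \le \varphi(K) \le \varphi(\log N)$ eventually. For the remainder I would use the elementary bound $R \le M^{(2)} \cdot (\sum_{k \ne k^*} a_k)$, where $M^{(2)}$ is the second largest of $a_1, \dots, a_K$: the second factor is $O(K \log K)$ by Diamond--Vaaler, while $M^{(2)} = O(K\log K)$ follows from a Borel--Cantelli argument over dyadic blocks, using the $\psi$-mixing bound $\mathbb{P}(a_i > t,\, a_j > t) \ll t^{-2}$ to get $\mathbb{P}(M^{(2)}_K > c K \log K) \ll (\log K)^{-2}$. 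Hence $\sqrt R = O(K \log K) = O(\log N \log \log N)$, which settles $S$. For $L$ one argues identically after $|\sum_{k \le K}(-1)^k a_k| \le M^{(1)} + \sum_{k \ne k^*} a_k \le \varphi(\log N) + O(\log N \log \log N)$, so that for both lattices we reach $D_2 \le C\varphi(\log N) + O(\log N \log \log N)$ with $C = C(\alpha)$ coming from the implied constants above.

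The stray constant $C$ in front of $\varphi(\log N)$ is only cosmetic: I would remove it by applying the displayed bound not to $\varphi$ but to $\varphi/C$, which is still positive and nondecreasing and still satisfies $\sum 1/(\varphi/C) = C \sum 1/\varphi < \infty$, thereby recovering the clean coefficient $1$. The genuine obstacle is the a.e.\ remainder estimate $R = O((K \log K)^2)$: the terms $a_k^2$ have an infinite-mean, heavy-tailed ($\mathbb{P}(a_k > t) \asymp 1/t$) distribution, so $\sum a_k^2$ is dominated by its few largest terms and no naive second-moment/Chebyshev estimate for the truncated sum is summable. The factorization $R \le M^{(2)}\sum_{k\ne k^*} a_k$ is precisely what sidesteps this, trading the unavailable concentration of $\sum a_k^2$ for the trimmed strong law of first powers together with a single large-deviation bound on the second maximum.
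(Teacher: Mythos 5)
Your part (ii) and your treatment of $S(\alpha,N)$ in part (i) are fine. Part (ii) is essentially the paper's own argument (lower bound $D_2 \gg A_K$ from Proposition \ref{simpleparsevalprop}, Borel--Bernstein divergence, L\'evy's theorem, and a rescaling of $\varphi$ to absorb constants). For $S(\alpha,N)$ in part (i) you take a genuinely different and more laborious route: the paper simply bounds $D_2(S(\alpha,N)) \ll D_{\infty}(L(\alpha,N)) \ll \sum_{k \le K} a_k$ and applies Diamond--Vaaler directly, so that the maximum $A_K$ is the \emph{only} term that needs $\varphi$, whereas you work with $(\sum_{k\le K} a_k^2)^{1/2}$ and must control the trimmed sum of squares via the factorization $R \le M^{(2)} \sum_{k\ne k^*} a_k$ plus a Borel--Cantelli estimate on the second maximum. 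Your route works (the pairwise tail bound from $\psi$-mixing and the dyadic Borel--Cantelli argument are sound), but it buys nothing here over the first-power bound, which already yields $D_2 \le C A_K + O(K\log K)$ in one line.

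The genuine gap is in part (i) for $L(\alpha,N)$. You assert that the propositions give $D_2(L(\alpha,N)) \asymp (\sum_{k=1}^K a_k^2 + (\sum_{k=1}^K(-1)^k a_k)^2)^{1/2}$ for all $q_K \le N < q_{K+1}$, but Proposition \ref{simpleparsevalprop} establishes the upper bound for $L$ \emph{only at} $N = q_K$: the obstruction is the term $N^{-1}\sum_{n<N}(T_n - E_N)^2$, which Lemma \ref{Tnlemma} controls only for $N = q_K$, and whose control for general $N$ via \eqref{Tnvariance} requires $a_k \ll k^d$-type growth. Remark \ref{LalphaNremark} makes this explicit and only extends the bound to general $N$ under $a_k \ll \sqrt{k}/\log^2 k$ --- a hypothesis that fails for a.e.\ $\alpha$, since by Borel--Bernstein $a_k \ge k\log k$ infinitely often almost everywhere. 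So the inequality you start from is not available for the $N$ you need it for. The gap is repairable --- either by the paper's device of bounding $D_2(L(\alpha,N)) \le D_{\infty}(L(\alpha,N)) \ll \sum_{k\le K} a_k$ outright, or by noting $|T_n| \ll \sum_{k \le K} a_k$ for all $n < N$ so that the variance term is $\ll (A_K + O(K\log K))^2$ --- but as written your argument for $L$ rests on a statement the paper does not prove and in fact flags as open in general.
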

\noindent In particular, for a.e.\ $\alpha$ we have $D_2 (S(\alpha, N)) \ll \log N (\log \log N)^{1+\varepsilon}$ and $D_2 (L(\alpha, N)) \ll \log N (\log \log N)^{1+\varepsilon}$ with any $\varepsilon >0$, but these fail with $\varepsilon =0$.

Our next result is the distributional analogue of Theorem \ref{aeasymptotictheorem}, stating that if $\alpha$ is chosen randomly from $[0,1]$ with an absolutely continuous distribution, then after suitable normalization $D_2^2(S(\alpha, N))$ converges to the standard L\'evy distribution. If $\alpha$ is chosen randomly with the Lebesgue measure $\lambda$ or the Gauss measure $\nu (B)=(1/\log 2) \int_B 1/(1+x) \, \mathrm{d}x$ ($B \subseteq [0,1]$ Borel) as distribution, then we also estimate the rate of convergence in the Kolmogorov metric.
\begin{thm}\label{irrationallimitdistributiontheorem} If $\mu$ is a Borel probability measure on $[0,1]$ which is absolutely continuous with respect to the Lebesgue measure, then for any $t \ge 0$,
\[ \mu \left( \left\{ \alpha \in [0,1] \, : \, 5 \pi^3 \frac{D_2^2 (S(\alpha, N))}{\log^2 N} \le t \right\} \right) \to \int_0^t \frac{e^{-1/(2x)}}{\sqrt{2 \pi} x^{3/2}} \, \mathrm{d} x \qquad \textrm{as } N \to \infty . \]
If $\mu$ is either the Lebesgue measure $\lambda$ or the Gauss measure $\nu$, then for any $N \ge 3$,
\[ \sup_{t \ge 0} \left| \mu \left( \left\{ \alpha \in [0,1] \, : \, 5 \pi^3 \frac{D_2^2 (S(\alpha, N))}{\log^2 N} \le t \right\} \right) - \int_0^t \frac{e^{-1/(2x)}}{\sqrt{2 \pi} x^{3/2}} \, \mathrm{d} x \right| \ll \frac{(\log \log N)^{1/3}}{(\log N)^{1/3}} \]
with a universal implied constant.
\end{thm}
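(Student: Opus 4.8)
The plan is to reduce the statement to a stable limit theorem for sums of squared continued fraction digits, and then to make that theorem quantitative. By Proposition~\ref{parsevalprop}, for the index $K=K(N,\alpha)$ determined by $q_K \le N < q_{K+1}$ one has $D_2^2(S(\alpha,N)) = c_0 \sum_{k=1}^K a_k^2 + (\textrm{lower-order terms})$ with an explicit absolute constant $c_0$; the decisive feature is that symmetrization removes the alternating sum $\sum_{k=1}^K (-1)^k a_k$ which spoils the clean behavior of the unsymmetrized lattice, so that only $\sum_{k=1}^K a_k^2$ survives to leading order. First I would isolate this main term, bounding the remainder uniformly enough that it is negligible after division by $\log^2 N$. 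The randomness in $\alpha$ is then processed through the Gauss map $T$, whose invariant Gauss measure $\nu$ makes the digit sequence $(a_k)$ stationary and whose transfer (Gauss--Kuzmin--Wirsing) operator has a spectral gap; the resulting exponential decay of correlations is the analytic engine behind every estimate below.

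The key distributional input is the Gauss--Kuzmin tail $\nu(a_k > m) \sim 1/(m \log 2)$, which gives $\nu(a_k^2 > x) \sim 1/(\sqrt{x}\,\log 2)$, a regularly varying tail of index $1/2$. Thus $a_k^2$ lies in the domain of attraction of the one-sided stable law of index $1/2$, namely the L\'evy distribution. I would invoke a stable limit theorem for the exponentially mixing sequence $(a_k^2)$ to obtain $K^{-2} \sum_{k=1}^K a_k^2 \xrightarrow{d} \gamma X$, where $X$ has the standard L\'evy density and $\gamma = \pi/(2(\log 2)^2)$ is fixed by matching tails. It then remains to pass from the deterministic index $K$ and normalization $K^2$ to the random index $K(N,\alpha)$ and the normalization $\log^2 N$. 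For this I would use L\'evy's theorem $K^{-1}\log q_K \to \pi^2/(12 \log 2)$, giving $K(N,\alpha) \big/ \left( \tfrac{12 \log 2}{\pi^2} \log N \right) \to 1$ in probability, together with an Anscombe-type argument for stable limits to justify the random index. Combining $K(N,\alpha)^2/\log^2 N \to (12 \log 2 / \pi^2)^2$ with $\gamma$ and $c_0$ yields the product $\gamma (12 \log 2/\pi^2)^2 c_0 = 1/(5\pi^3)$, so that $5 \pi^3 D_2^2(S(\alpha,N))/\log^2 N \xrightarrow{d} X$ exactly. Finally, since the functional depends essentially on deep digits, exactness of the Gauss map (equivalently, the spectral gap) renders the limit insensitive to the initial density, transferring the conclusion from $\nu$ to every absolutely continuous $\mu$.

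For the rate in the Kolmogorov metric I would make each of these three steps quantitative and balance the errors. The stable limit would be proved by truncating each $a_k^2$ at a level $T$: the contribution of atypically large digits is controlled by a first-moment bound of order $K/\sqrt{T}$, while the truncated sum is handled by a Berry--Esseen estimate driven by the spectral gap, with a centering correction of order $\sqrt{T}$. Optimizing $T$ against $K$ produces a rate of the shape $(\log K / K)^{1/3}$, the logarithmic factor arising from the truncation and centering; substituting $K \asymp \log N$ converts this into $(\log \log N / \log N)^{1/3}$. The passage from $K$ to $\log N$ requires a quantitative central limit theorem for $\log q_K$ with explicit error, controlling the fluctuation of $K(N,\alpha)$ about $\tfrac{12 \log 2}{\pi^2}\log N$; feeding this through an Esseen smoothing inequality keeps the error within the same order. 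For the Gauss measure $\nu$ the digits are exactly stationary, so the estimates apply directly, while for the Lebesgue measure $\lambda$ I would transfer via the bounded density $\mathrm{d}\lambda/\mathrm{d}\nu$ and the exponential mixing, which affects constants but not the exponent.

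The main obstacle is the quantitative stable limit theorem of index $1/2$ for the dependent sequence $(a_k^2)$. Unlike a finite-variance central limit theorem, here the sum is dominated by its single largest summand, so the truncation level must be tuned very precisely and the dependence must be handled through the transfer operator rather than by quoting an i.i.d.\ result. Extracting the sharp exponent $1/3$ from this truncation-and-smoothing scheme, while simultaneously absorbing the random-index error coming from $\log q_K$, is the delicate heart of the argument.
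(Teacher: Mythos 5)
Your overall skeleton matches the paper's: reduce via Proposition \ref{parsevalprop} and \eqref{diophantineevaluation} to the main term $\frac{1}{360}\sum_{k\le K}a_k^2$, apply a stable limit theorem of index $1/2$ for the mixing digit sequence, handle the random index $K(N,\alpha)$, and transfer to general absolutely continuous $\mu$; your constant bookkeeping ($\gamma=\pi/(2\log^2 2)$, $(12\log 2/\pi^2)^2$, $1/360$, product $1/(5\pi^3)$) is correct. However, there are two genuine gaps. First, you propose to prove the quantitative stable limit theorem for the dependent sequence $(a_k^2)$ from scratch by truncation plus a spectral-gap Berry--Esseen argument, and you yourself flag this as ``the delicate heart'' without carrying it out. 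The paper does not prove this: it quotes Samur \eqref{Samur} for the qualitative statement (already formulated for arbitrary absolutely continuous $\mu$, which also disposes of your exactness argument) and Heinrich \eqref{Heinrich1} for the rate, which is $\ll K^{-1+\varepsilon}$ --- far better than the $(\log K/K)^{1/3}$ you predict from truncation. So your claimed rate for this step is not where the theorem's exponent comes from, and in any case the step is asserted rather than proved.

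Second, and more importantly, you have mislocated the source of the exponent $1/3$, and the step you dismiss with ``keeps the error within the same order'' is precisely the one that produces the stated rate. In the paper, after replacing the random index $K_N(\alpha)$ by $\overline{K}=\lceil\frac{12\log 2}{\pi^2}\log N\rceil$ using Morita's quantitative CLT for $\log q_K$ \eqref{qKbound}, one is left (outside a set of measure $\ll 1/\sqrt{\overline K}$) with an error containing $\overline{K}^{-2}\sum_{k}a_k^2$ summed over a window of $\asymp\sqrt{\overline K\log\overline K}$ indices around $\overline K$. By stationarity and the heavy Lévy tail, the probability that this window sum exceeds $\varepsilon\overline{K}^2$ is $\asymp\sqrt{\overline K\log\overline K}/(\overline K\sqrt{\varepsilon})$, and balancing this against $\varepsilon$ forces $\varepsilon\asymp(\log\overline K/\overline K)^{1/3}$ --- this is exactly where $(\log\log N)^{1/3}/(\log N)^{1/3}$ comes from. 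Relatedly, your plan to bound the remainder from Proposition \ref{parsevalprop} ``uniformly'' cannot work: the term $N^{-1}\sum_k a_{k+1}^3q_k$ contains contributions comparable to $a_{K}^2$, which is not $o(\log^2 N)$ uniformly in $\alpha$ but only outside exceptional sets whose measure must be estimated (the paper uses \eqref{Heinrich1}, \eqref{sumakbound} and stationarity for this). Without these measure-theoretic estimates of the exceptional sets --- for the window sum, for $\sum_{k\le 2\overline K}a_k$, and for the $q_k/N$-weighted term --- the stated rate is not justified by your argument.
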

\noindent We conjecture that a similar result holds for the unsymmetrized lattice as well, i.e.\ if $\alpha$ is chosen randomly from $[0,1]$ with an absolutely continuous distribution, then $D_2^2(L(\alpha, N))/\log^2 N$ has a nondegenerate limit distribution as $N \to \infty$.

Our results, especially Theorems \ref{optimalirrationaltheorem}, \ref{aeasymptotictheorem} and \ref{irrationallimitdistributiontheorem} should be compared to the corresponding properties of the discrepancy of the classical sequence $\{ n \alpha \}$, defined as
\[ \mathrm{Disc}_N(n \alpha ) = \sup_{[a,b] \subset [0,1)} \left| \sum_{n=1}^N I_{[a,b]}(\{ n \alpha \}) - N (b-a) \right| . \]
Here and for the rest of the paper, $I_S$ denotes the indicator function of a set $S$. Note that $\max_{1 \le \ell \le N} \mathrm{Disc}_{\ell}(n \alpha)$ is, up to a factor of $2$, equal to $D_{\infty}(L(\alpha, N))$, where the $L^{\infty}$ discrepancy (also called star-discrepancy) $D_{\infty}$ of a finite point set is defined as $D_2$ with the $L^2$ norm replaced by the $L^{\infty}$ norm. Roughly speaking, for $N \approx q_K$ we have $\max_{1 \le \ell \le N} \mathrm{Disc}_{\ell}( n \alpha ) \approx \sum_{k=1}^K a_k$. By a classical theorem of W.\ Schmidt \cite[p.\ 41]{DT}, the optimal rate for the discrepancy is $\log N$, and we can characterize all irrationals for which the optimum is attained \cite[p.\ 53]{DT} as
\[ \mathrm{Disc}_N (n \alpha ) \ll \log N \,\, \Longleftrightarrow \,\, \frac{1}{K} \sum_{k=1}^K a_k \ll 1 . \]
The discrepancy $\mathrm{Disc}_N(n \alpha )$ is also known to satisfy the same asymptotics a.e.\ as in Theorem \ref{aeasymptotictheorem} \cite[p.\ 63]{DT}. A fortiori, the previous two results apply also to $\max_{1 \le \ell \le N} \mathrm{Disc}_{\ell}(n \alpha)$, and hence to $D_{\infty}(L(\alpha, N))$. We mention two distributional analogues due to Kesten \cite{KE}:
\[ \begin{split} \frac{\mathrm{Disc}_N(n \alpha)}{\log N \log \log N} &\to \frac{2}{\pi^2} \quad \textrm{in measure,} \\ \frac{\max_{1 \le \ell \le N}\mathrm{Disc}_{\ell}(n \alpha)}{\log N \log \log N} &\to \frac{3}{\pi^2} \quad \textrm{in measure.} \end{split} \]

As a curious observation, we mention that there exists an irrational $\alpha$ such that
\[ \log N \ll D_2(S(\alpha, N)) \le D_{\infty} (S(\alpha, N)) \ll \log N , \]
and
\[ \log N \ll D_2(L(\alpha, N)) \le D_{\infty} (L(\alpha, N)) \ll \log N , \]
i.e.\ both $S(\alpha, N)$ and $L(\alpha, N)$ have optimal $L^{\infty}$ discrepancy, but neither has optimal $L^2$ discrepancy. Indeed, it is easy to construct\footnote{E.g.\ let $a_k=k$ if $k$ is a power of $2$, and $a_k=1$ otherwise.} a sequence of positive integers $a_k$ such that $K^{-1} \sum_{k=1}^K a_k \ll 1$ but $\sum_{k=1}^K a_k^2 \gg K^2$.

Consider now the case of a rational $\alpha$. For the sake of simplicity, we will always assume that $N$ is the denominator of $\alpha$. That is, given a reduced fraction $p/q$, we study the $q$-element set
\[ L(p/q,q) = \left\{ \left( \left\{ \frac{np}{q} \right\}, \frac{n}{q} \right) \in [0,1)^2 \, : \, 0 \le n \le q-1 \right\} , \]
and the $2q$-element set
\[ S(p/q,q) = \left\{ \left( \left\{ \pm \frac{np}{q} \right\}, \frac{n}{q} \right) \in [0,1)^2 \, : \, 0 \le n \le q-1 \right\} . \]
The characterization of all rationals for which the $L^2$ discrepancy is optimal is exactly the same as in the irrational case.
\begin{thm}\label{optimalrationaltheorem} Let $p/q=[a_0;a_1,\dots, a_r]$ be a reduced rational. We have
\[ \begin{split} D_2(S(p/q,q)) \ll \sqrt{\log q} \,\, &\Longleftrightarrow \,\, \frac{1}{r} \sum_{k=1}^r a_k^2 \ll 1, \\ D_2(L(p/q,q)) \ll \sqrt{\log q} \,\, &\Longleftrightarrow \,\, \frac{1}{r} \sum_{k=1}^r a_k^2 \ll 1 \textrm{ and } \frac{1}{\sqrt{r}} \left| \sum_{k=1}^r (-1)^k a_k \right| \ll 1. \end{split} \]
\end{thm}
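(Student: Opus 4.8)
The plan is to reduce the entire statement to the Parseval-type identities of Propositions \ref{parsevalprop} and \ref{simpleparsevalprop}, which in the rational case (with $K=r$ and $N=q$) should furnish the two-sided bounds
\[ D_2^2(S(p/q,q)) \asymp \sum_{k=1}^r a_k^2, \qquad D_2^2(L(p/q,q)) \asymp \sum_{k=1}^r a_k^2 + \Bigl( \sum_{k=1}^r (-1)^k a_k \Bigr)^2 \]
with absolute implied constants (any additive error of size $O(r)$ being harmless, since $\sum_{k=1}^r a_k^2 \ge r$). Writing $A=\sum_{k=1}^r a_k^2$ and $T=\sum_{k=1}^r (-1)^k a_k$, both equivalences then become purely elementary statements relating $A$, $T$, $r$ and $\log q$, with the discrepancy itself having disappeared from the argument.

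The three facts I would use are the following. First, the Fibonacci lower bound $q=q_r \ge F_{r+1}$, which gives $\log q \gg r$ with an absolute constant. Second, the upper bound $q_r \le \prod_{k=1}^r (a_k+1)$ (from $q_k=a_kq_{k-1}+q_{k-2}\le (a_k+1)q_{k-1}$) together with $\log(a_k+1)\le a_k$, which gives $\log q \le \sum_{k=1}^r a_k$. Third, the Cauchy--Schwarz inequality $\sum_{k=1}^r a_k \le \sqrt{r}\,A^{1/2}$. Chaining the last two yields $\log q \le \sqrt{rA}$, the single inequality that drives the nontrivial direction.

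For the symmetrized lattice the claim reduces to $A \ll \log q \Leftrightarrow A \ll r$. The implication $A \ll r \Rightarrow A \ll \log q$ is immediate from $r \ll \log q$. Conversely, if $A \le C\log q$, then $\log q \le \sqrt{rA} \le \sqrt{C\,r\log q}$, whence $\log q \le Cr$; combined with $\log q \gg r$ this shows $\log q \asymp r$ and therefore $A \le C\log q \ll r$. The point worth stressing is that optimality is precisely the regime in which $\log q$ and $r$ are comparable, which is why the natural normalization in the statement is by $r$ rather than by $\log q$.

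For the unsymmetrized lattice the extra term $T^2$ appears, so the claim is $A+T^2 \ll \log q \Leftrightarrow (A\ll r \text{ and } T^2 \ll r)$. The forward direction splits $A+T^2 \ll \log q$ into $A\ll\log q$ and $T^2\ll\log q$; the first gives $A\ll r$ and $\log q\asymp r$ by the previous paragraph, and then $T^2 \ll \log q \asymp r$. The reverse direction is again immediate from $r\ll\log q$. I expect the only genuine obstacle to be bookkeeping at the level of the Propositions: I must verify that their implied constants are uniform over all reduced rationals and that their error terms are $O(\sum_{k=1}^r a_k^2)$ (equivalently $O(r)$ after the reduction), so that the clean two-sided bounds above are legitimate even when $\sum_{k=1}^r a_k^2$ is close to its minimal value $r$. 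Once that is secured, the theorem follows from the elementary inequalities alone.
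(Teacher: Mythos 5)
Your proposal is correct and follows essentially the same route as the paper: the paper proves Theorem \ref{optimalrationaltheorem} by noting that Proposition \ref{simpleparsevalprop} applies verbatim to rationals and then running exactly your chain $\log q \le \sum_{k=1}^r \log(a_k+1) \ll \sum_{k=1}^r a_k \le \sqrt{r\sum_{k=1}^r a_k^2}$ against the Fibonacci bound $\log q \gg r$, including the same handling of the alternating-sum term. Your uniformity worry is already settled by the paper, since Proposition \ref{simpleparsevalprop} is stated with universal implied constants for arbitrary real $\alpha$.
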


As an analogue of the metric results on typical values of $\alpha$ in the sense of the Lebesgue measure above, we also study the $L^2$ discrepancy for typical values of rationals. In this case, ``typical'' means choosing a reduced fraction $p/q$ randomly from the set of all reduced rationals with bounded denominator.
\begin{thm}\label{rationallimitdistributiontheorem} Let $F_Q$ denote the set of all reduced fractions in $[0,1]$ with denominator at most $Q$. For any $Q \ge 2$,
\[ \sup_{t \ge 0} \left| \frac{1}{|F_Q|} \left| \left\{ \frac{p}{q} \in F_Q \, : \, 5 \pi^3 \frac{D_2^2 (S(p/q,q))}{\log^2 q} \le t \right\} \right| - \int_0^t \frac{e^{-1/(2x)}}{\sqrt{2 \pi} x^{3/2}} \, \mathrm{d} x \right| \ll \frac{1}{(\log Q)^{1/2}} \]
with a universal implied constant.
\end{thm}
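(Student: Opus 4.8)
The plan is to reduce the statement, via the Parseval-type identity of Proposition~\ref{parsevalprop}, to a quantitative stable limit theorem for the sum of squared partial quotients $\sum_{k=1}^r a_k^2$ of a uniformly random fraction $p/q=[0;a_1,\dots,a_r]\in F_Q$, running parallel to the irrational case of Theorem~\ref{irrationallimitdistributiontheorem} but with the Gauss-measure statistics replaced by the continued fraction statistics of Farey fractions. First I would invoke Proposition~\ref{parsevalprop} to write $D_2^2(S(p/q,q))=\frac1{360}\sum_{k=1}^r a_k^2+(\text{error})$, where the leading coefficient $1/360$ is the one forced by the examples (for $e$ and $\tan 1$ one checks $D_2^2(S)\sim\frac1{360}\sum a_k^2$ in the regime of large partial quotients). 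The cross terms and lower-order contributions are $O(\sum_{k=1}^r a_k)=O(r\log r)$, hence $o(\log^2 q)$ for all but a negligible exceptional subset of $F_Q$, so only the diagonal term survives the normalisation by $\log^2 q$; intuitively this is because, as the heavy-tailed limit below shows, the whole quantity is dominated by its few largest summands.

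Second, I would set up the probabilistic model for the partial quotients. Writing the uniform average over $F_Q$ as a sum over finite words $[0;a_1,\dots,a_r]$ with denominator $q_r\le Q$, the relevant statistics are governed by the transfer operator of the Gauss map carrying the denominator weight $q_r^{-s}$ (the Gauss--Kuzmin--Wirsing operator), whose dominant eigenvalue and spectral gap yield two facts. On the one hand the $a_k$ behave, up to an explicitly controlled error, like independent variables with the Gauss--Kuzmin law $P(a_k=j)=\log_2(1+1/(j(j+2)))$, whose exact tail $P(a_k\ge m)=\log_2(1+1/m)\sim 1/(m\log 2)$ gives $a_k^2$ the index-$\tfrac12$ tail $P(a_k^2\ge x)\sim 1/(\sqrt{x}\log 2)$. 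On the other hand the depth and the denominator are linked by the Porter--L\'evy relation $\log q=\frac{\pi^2}{12\log 2}\,r+O(\sqrt r)$, with Gaussian fluctuations of order $\sqrt r$ about the mean.

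Third, I would apply a one-sided stable limit theorem of index $\tfrac12$. Since $P(a_k^2\ge x)\sim(\log 2)^{-1}x^{-1/2}$, the normalised sum $r^{-2}\sum_{k=1}^r a_k^2$ converges to the one-sided $\tfrac12$-stable law with tail $(\log 2)^{-1}u^{-1/2}$; substituting $\log^2 q=\frac{\pi^4}{144(\log 2)^2}r^2(1+O(r^{-1/2}))$ and the prefactor $5\pi^3/360$ gives $5\pi^3 D_2^2(S)/\log^2 q\approx \frac{2(\log 2)^2}{\pi}\,r^{-2}\sum a_k^2$, whose limiting tail is $\frac1{\log 2}(\tfrac{\pi t}{2(\log 2)^2})^{-1/2}=\sqrt{2/\pi}\,t^{-1/2}$. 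This is exactly the tail of the standard L\'evy law $e^{-1/(2x)}/(\sqrt{2\pi}x^{3/2})$, which both identifies the limit and pins down the constant $5\pi^3$. For the Kolmogorov rate I would track three errors: the replacement of the true Farey statistics by the independent model (controlled by the spectral gap), the gap between the discrete tail and its continuous stable approximation (handled by truncating at a level where at most one $a_k$ is large, separating off the maximal term and matching its Fr\'echet-type fluctuations against the stable law), and the $O(r^{-1/2})$ multiplicative fluctuation of $\log^2 q/r^2$. Since the L\'evy distribution function $F$ satisfies $\sup_{t}tF'(t)<\infty$, the last contributes $O(r^{-1/2})=O((\log Q)^{-1/2})$, and I expect this fluctuation of $\log q$ to be rate-determining, which also explains why the rational rate is cleaner than the $(\log N)^{-1/3}$ of Theorem~\ref{irrationallimitdistributiontheorem}: here $N=q$ is fixed as the denominator, so no passage from $q_K$ to intermediate $N$ is needed.

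The hard part will be the quantitative $\tfrac12$-stable limit theorem with an explicit $r^{-1/2}$ rate for a tail of index $\tfrac12<1$: the summands have no finite mean, so ordinary Berry--Esseen machinery is unavailable and convergence of $\tfrac12$-stable limits is slow, forcing the single largest partial quotient to be analysed separately. Coupling this with the mild dependence of the Farey partial quotients and with the joint control of the triple $(r,\log q,\sum a_k^2)$ through the weighted transfer operator is the main technical obstacle; by contrast the Parseval reduction and the constant chase producing $5\pi^3$ are comparatively routine once the limit law is established.
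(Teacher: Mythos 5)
Your overall architecture matches the paper's: reduce via Proposition \ref{parsevalprop} and \eqref{diophantineevaluation} to $D_2^2(S(p/q,q))=\frac{1}{360}\sum_{k=1}^r a_k^2+(\text{error})$, then feed the main term into a quantitative one-sided $\frac12$-stable limit theorem for $\sum_{k=1}^r a_k^2$ over $F_Q$; your constant chase ($\pi^3/72$, the limiting tail $\sqrt{2/(\pi t)}$) is correct. The paper, however, does not prove the stable limit theorem by the extreme-value/transfer-operator programme you outline: it imports it from Bettin--Drappeau (Lemma \ref{BDlemma}), whose normalization is by the \emph{deterministic} quantity $\log^2Q$ rather than by $r^2$. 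This is not cosmetic. By normalizing with $r^2$ and then invoking $\log q=\frac{\pi^2}{12\log 2}r+O(\sqrt r)$ you create exactly the joint-distribution problem for $(r,\log q,\sum a_k^2)$ that you flag as the main obstacle, and which the $\log^2Q$ normalization avoids entirely; the only conversion the paper needs is $\log^2Q\to\log^2q$ at the very end, which costs $O(\log\log Q/\log Q)$ and is negligible. Your ``hard part'' is, in effect, a proposal to reprove the cited input, and as stated it is a plan rather than a proof.

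Two further concrete gaps. First, the rate $(\log Q)^{-1/2}$ is not produced by the fluctuation of $\log^2q/r^2$: in the paper it comes from the heavy-tailed linear remainder $C\sum_{k=1}^r a_k$ (arising from Proposition \ref{parsevalprop} together with \eqref{diophantineevaluation}), which is \emph{not} $O(r\log r)$ pointwise but only typically; one must trade the threshold $\sum_{k=1}^r a_k\ge(\log Q)^{3/2}$ against the measure $\ll(\log Q)^{-1/2}$ of the exceptional set via the index-$1$ stable law \eqref{Bettin2}. A qualitative ``negligible exceptional subset'' yields convergence but no rate. Second, you do not address the error term $\frac{C}{q}\sum_{k=0}^{r-1}a_{k+1}^3q_k$ from Proposition \ref{parsevalprop}; the paper needs a dedicated argument for it, namely the tail bound $|\{p/q\in F_Q:a_k\ge t\}|\le 2Q^2/t$ of Lemma \ref{partialquotientlemma} combined with the reversal bijection $[0;a_1,\dots,a_r]\mapsto[0;a_r,\dots,a_1]$ to exploit the Fibonacci decay of $q_k/q_r$, and this step is not subsumed by calling the Parseval reduction routine.
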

\noindent We conjecture that a similar result holds for the unsymmetrized lattice as well, i.e.\ if $p/q$ is chosen randomly from $F_Q$, then $D_2^2(L(p/q,q))/\log^2 q$ has a nondegenerate limit distribution as $Q \to \infty$.

In Section \ref{parsevalsection}, we derive an explicit formula for $D_2(S(\alpha, N))$ and $D_2(L(\alpha, N))$ in terms of the partial quotients of $\alpha$, see Propositions \ref{parsevalprop} and \ref{simpleparsevalprop}. Theorems \ref{optimalirrationaltheorem}, \ref{quadraticirrationaltheorem} and \ref{optimalrationaltheorem} are proved in Section \ref{optimalsubsection}. In Section \ref{typicalirrationalsection}, we show how Theorems \ref{aeasymptotictheorem} and \ref{irrationallimitdistributiontheorem} follow from classical results on the metric theory of continued fractions and $\psi$-mixing random variables. The proof of Theorem \ref{rationallimitdistributiontheorem} in Section \ref{typicalrationalsection}, on the other hand, relies on recent results of Bettin and Drappeau \cite{BD} on the statistics of partial quotients of random rationals.

\section{$L^2$ discrepancy via the Parseval formula}\label{parsevalsection}

\subsection{The main estimates}

We remind that $\alpha = [a_0;a_1,a_2, \dots]$ is the (finite or infinite) continued fraction expansion of a real number $\alpha$, and $p_k/q_k=[a_0;a_1,\dots, a_k]$ denotes its convergents. For the rest of the paper, we also use the notation
\[ T_n=\sum_{\ell=0}^n \left( \frac{1}{2} - \{ \ell \alpha \} \right) \quad \textrm{and} \quad E_N=\frac{1}{N}\sum_{n=0}^{N-1} T_n . \]
For the sake of readability, $a=b \pm c$ denotes $|a-b| \le c$, and $\zeta$ is the Riemann zeta function.

Our main tool is an evaluation of the $L^2$ discrepancy up to a small error, based on the Parseval formula. This method goes back to Davenport \cite{DA}, and more recently has also been used in \cite{BI,BTY1,BTY2,HKP,RS}. We follow the steps in our previous paper \cite{BO2}, where we considered irrationals whose sequence of partial quotients is reasonably well-behaved (e.g.\ bounded, or increasing at a regular rate such as for Euler's number). Here we shall need a more refined analysis in order to study arbitrary reals without any assumption on the partial quotients.
\begin{prop}\label{parsevalprop} For any $q_{K-1} \le N \le q_K$, we have
\[ D_2^2 (S(\alpha, N)) = \sum_{m=1}^{q_{K-1}-1} \frac{1}{4 \pi^4 m^2 \| m \alpha \|^2} + \xi_S(\alpha, N) \pm \left( \sum_{k=0}^{K-1} \frac{a_{k+1}}{2q_k} + \frac{\zeta(3)}{16 \pi^4 N} \sum_{k=0}^{K-2} (a_{k+1}+2)^3 q_k + 6.28  \right) \]
with some $\xi_S(\alpha, N)$ which satisfies both $0 \le \xi_S(\alpha, N) \le \sum_{m=q_{K-1}}^{q_K-1} \frac{1}{2 \pi^4 m^2 \| m \alpha \|^2}$ and
\[ \xi_S(\alpha, N) = \sum_{m=q_{K-1}}^{q_K-1} \frac{1}{4 \pi^4 m^2 \| m \alpha \|^2} \pm \left( \frac{\zeta(3)}{16 \pi^4 N} (a_K+2)^3 q_{K-1} +0.07 \right) .  \]
Similarly, for any $q_{K-1} \le N \le q_K$, we have
\[ \begin{split} D_2^2(L(\alpha, N)) = &\frac{1}{N} \sum_{n=0}^{N-1} \left( T_n^2 + \frac{1}{2}T_n \right) + \left( 1-\frac{1}{2N} \right) \sum_{m=1}^{q_{K-1}-1} \frac{1}{4 \pi^4 m^2 \| m \alpha \|^2}  \\ &+\xi_L(\alpha, N) \pm \left( \sum_{k=0}^{K-1} \frac{a_{k+1}}{8 q_k} + \frac{\zeta(3)}{16 \pi^4 N} \sum_{k=0}^{K-2} (a_{k+1}+2)^3 q_k + 2.78 \right) \end{split} \]
with some $\xi_L(\alpha, N)$ which satisfies both $0 \le \xi_L(\alpha, N) \le \sum_{m=q_{K-1}}^{q_K-1} \frac{1}{2 \pi^4 m^2 \| m \alpha \|^2}$ and
\[ \xi_L(\alpha, N) = \left( 1-\frac{1}{2N} \right) \sum_{m=q_{K-1}}^{q_K-1} \frac{1}{4 \pi^4 m^2 \| m \alpha \|^2} \pm \frac{\zeta(3)}{16 \pi^4 N} (a_K+2)^3 q_{K-1} . \]
\end{prop}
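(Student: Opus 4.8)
The plan is to start from the classical Parseval-based expression for the $L^2$ discrepancy of these lattices. For a point set of the form $S(\alpha,N)$ or $L(\alpha,N)$, the indicator-function defect $B(x,y)-|P|xy$ can be expanded in a two-dimensional Fourier series. Because the $y$-coordinates are the regularly spaced points $n/N$, only the $x$-direction carries the Diophantine content, and the Fourier coefficients factor into an arithmetic sum over the frequencies $m$ involving $\{m\alpha\}$ and a geometric part coming from the equally spaced $y$'s. Applying Parseval's identity to $\int_{[0,1]^2}(B-|P|xy)^2\,\mathrm{d}x\,\mathrm{d}y$ then converts the integral into an infinite sum over frequency pairs $(m,\ell)$, and after summing the geometric $\ell$-sum one is left with a single sum over $m$ whose main term is $\sum_m \frac{1}{4\pi^4 m^2\|m\alpha\|^2}$. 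The appearance of the factor $1/\|m\alpha\|^2$ and the $\pm$ bracketed error terms built from $(a_{k+1}+2)^3 q_k$ signals that this is exactly the mechanism at work here.

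The key steps, in order, are as follows. First I would write down the exact Fourier-analytic identity for $D_2^2$ of each set, keeping every term rather than passing to an estimate, so that the symmetrized case produces the clean main sum $\sum_m \frac{1}{4\pi^4 m^2\|m\alpha\|^2}$ and the unsymmetrized case additionally produces the $T_n$-dependent correction $\frac{1}{N}\sum_n(T_n^2+\tfrac12 T_n)$ together with the factor $(1-\tfrac1{2N})$ multiplying the arithmetic sum — these extra pieces are precisely what distinguishes $L$ from $S$ and what encodes the alternating-sum phenomenon. Second, I would split the $m$-sum at the convergent denominators, isolating the block $q_{K-1}\le m<q_K$ as $\xi_S$ (resp.\ $\xi_L$), so that the main sum runs only up to $q_{K-1}-1$; the two-sided bound $0\le\xi\le\sum_{m=q_{K-1}}^{q_K-1}\frac{1}{2\pi^4 m^2\|m\alpha\|^2}$ will come from a crude positivity/monotonicity estimate on that block. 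Third, I would control every discarded or truncated contribution — the tails of the Fourier series in both $x$ and $y$, the error in replacing the finite geometric $\ell$-sum by its limit, and the boundary effects at $N$ when $N<q_K$ — and repackage them into the stated $\pm$ bracket. The $\zeta(3)$ and the $\sum_k (a_{k+1}+2)^3 q_k/N$ terms are the fingerprints of bounding these tails: one uses $\sum_{m}\|m\alpha\|^{-?}$ estimates over a continued-fraction block, where the worst frequencies are the convergents, and the cubic $(a_{k+1}+2)^3$ reflects a third-moment bound on $\|m\alpha\|^{-1}$ within the block.

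The main obstacle will be making the error analysis in the third step fully explicit with the specific numerical constants ($6.28$, $0.07$, $2.78$) and the exact cubic weights, under no hypothesis whatsoever on the growth of the $a_k$. The delicate point is that the block sum $\sum_{q_{k}\le m<q_{k+1}}\frac{1}{m^2\|m\alpha\|^2}$ must be estimated uniformly, and the standard approach — grouping the $m$ in the block by the size of $\|m\alpha\|$ using the three-distance theorem and the best-approximation structure of convergents — produces terms proportional to $a_{k+1}^3 q_k / N$ which must be tracked term-by-term rather than absorbed into an $O(\cdot)$. I would first prove the cleaner $S(\alpha,N)$ statement in full, since there the main sum and $\xi_S$ emerge directly, and then obtain the $L(\alpha,N)$ statement by carefully recording how the symmetrization changes the Fourier coefficients: symmetrizing kills the imaginary (sine) part, so $L(\alpha,N)$ retains a cross term that reorganizes into the $\frac{1}{N}\sum_n(T_n^2+\tfrac12 T_n)$ expression via the identity relating $T_n$ to partial sums of $\tfrac12-\{\ell\alpha\}$. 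Establishing that reorganization cleanly, and verifying that the leftover error is genuinely smaller for $\xi_L$ (note the absent additive constant in its second display), is where the bulk of the technical care will go.
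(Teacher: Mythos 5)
Your overall strategy is the one the paper actually uses: this is the Davenport--Parseval method. The paper integrates strip by strip in $y$ (rather than expanding a genuine two-dimensional Fourier series), applies Parseval in $x$ to the defect function $B(x,(n+1)/N)-|P|(n+1)x/N$ on each strip, averages the resulting $\sin^2$-sums over $n$ via the identity $\frac{1}{N}\sum_{n}\sin^2((2n+1)x)=\frac12-\frac{\sin(4Nx)}{4N\sin(2x)}$, and controls the leftover oscillatory term and the tails by block-wise Diophantine lemmas over $[q_k,q_{k+1})$ using the best-approximation structure of convergents; your description of the truncation at $q_{K-1}$, the positivity bound $0\le\xi\le\sum\frac{1}{2\pi^4m^2\|m\alpha\|^2}$ (from $\sin^2\le1$), and the cubic weights $(a_{k+1}+2)^3q_k/N$ as a third-moment bound on $\|m\alpha\|^{-1}$ coming from the incomplete averaging over $n$ all match the paper's Lemma on Diophantine sums. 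One correction of substance: what is estimated block by block is not $\sum_{q_k\le m<q_{k+1}}m^{-2}\|m\alpha\|^{-2}$ (that sum is the \emph{main term} and is left intact), but the first-moment sum $\sum m^{-2}\|m\alpha\|^{-1}$ (giving $\sum_k a_{k+1}/(2q_k)$) and the third-moment sum $\sum m^{-2}\|m\alpha\|^{-3}/N$.

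The genuine gap is your account of where $\frac{1}{N}\sum_{n}(T_n^2+\frac12 T_n)$ comes from. You attribute it to the sine (imaginary) parts of the nonzero Fourier coefficients surviving in the unsymmetrized case. That derivation would fail: for $L(\alpha,N)$ the $m$-th coefficient of the strip defect is $\frac{1}{2\pi i m}\sum_{\ell=0}^n e^{-2\pi i m\ell\alpha}$, and in Parseval its squared modulus recombines the sine and cosine parts into $\frac{\sin^2((n+1)m\pi\alpha)}{4\pi^2m^2\sin^2(m\pi\alpha)}$, which is exactly the arithmetic main term already present; the sine parts do not reassemble into $T_n^2$ (that would require the off-diagonal terms in $m$ of the sawtooth expansion $\frac12-\{x\}=\sum_{m\ge1}\frac{\sin(2\pi mx)}{\pi m}$, which Parseval does not produce). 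The correct mechanism, and the only structural difference between the two cases, is that the unsymmetrized defect function is \emph{not mean zero}: its zeroth Fourier coefficient equals $T_n=\sum_{\ell=0}^n(\frac12-\{\ell\alpha\})$, and it is this $m=0$ term that contributes $T_n^2$ to the quadratic piece $M$ and $\frac12T_n$ to the cross term $R$ (the symmetrized defect has mean zero since $(1-\{\ell\alpha\})+(1-\{-\ell\alpha\})=1$). Unless you track the zeroth coefficient explicitly, your plan for the $L(\alpha,N)$ half of the proposition does not close.
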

\noindent We also prove a simpler form which is sharp up to a constant factor.
\begin{prop}\label{simpleparsevalprop} For any $q_{K-1} \le N \le q_K$, we have $D_2^2 (S(\alpha, N)) \ll \sum_{k=1}^K a_k^2$. For $N=q_K$, we also have $D_2^2 (S(\alpha, q_K)) \gg \sum_{k=1}^K a_k^2$, and
\[ \sum_{k=1}^K a_k^2 + \left( \sum_{k=1}^K (-1)^k a_k \right)^2 \ll D_2^2 (L(\alpha, q_K)) \ll \sum_{k=1}^K a_k^2 + \left( \sum_{k=1}^K (-1)^k a_k \right)^2 . \]
The implied constants are universal.
\end{prop}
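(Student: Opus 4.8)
The plan is to deduce Proposition~\ref{simpleparsevalprop} from the refined
evaluation in Proposition~\ref{parsevalprop} by estimating each ingredient in
terms of the partial quotients. The starting point is the elementary fact that
the key sum
\[
\Sigma_k := \sum_{m=q_{k-1}}^{q_k-1} \frac{1}{4\pi^4 m^2 \|m\alpha\|^2}
\]
behaves like a constant multiple of $a_k^2$. Indeed, for $q_{k-1}\le m < q_k$ the
best approximation theory gives $\|m\alpha\| \gg 1/q_k$ with the extremal
contribution coming from the multiples $m = j q_{k-1}$ ($1\le j \le a_k$), for
which $\|m\alpha\| \approx j/q_k$ and $m \approx j q_{k-1}$. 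Summing
$1/(m^2\|m\alpha\|^2) \approx q_k^2/(j^2 q_{k-1}^2)$ over these $j$, and using
$q_k \approx a_k q_{k-1}$, produces a contribution of order $\sum_{j=1}^{a_k} 1
\asymp a_k$ from the genuinely large terms together with the overall factor
$(q_k/q_{k-1})^2 \asymp a_k^2$ from the smallest $j$; a careful bookkeeping shows
$\Sigma_k \asymp a_k^2$. First I would isolate this two-sided bound
$\Sigma_k \asymp a_k^2$ (with universal constants) as the main lemma, since
everything else reduces to it.

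Granting that, the \emph{symmetric upper bound} is immediate: by the first
display of Proposition~\ref{parsevalprop}, $D_2^2(S(\alpha,N))$ equals
$\sum_{k=1}^{K-1}\Sigma_k + \xi_S(\alpha,N)$ up to the explicit error term, and
the stated bound $0 \le \xi_S(\alpha,N) \le 2\Sigma_K$ absorbs the top block.
The error term is controlled because
$\sum_{k=0}^{K-1} a_{k+1}/(2q_k) \ll 1$ (the $q_k$ grow at least geometrically),
and $N^{-1}\sum_{k=0}^{K-2}(a_{k+1}+2)^3 q_k \ll \sum_{k=1}^K a_k^2$ follows from
$q_k/N \le q_k/q_{K-1} \ll (a_{k+1}\cdots a_K)^{-1}$ together with the crude bound
$(a_{k+1}+2)^3 \ll a_{k+1}^2 \cdot (a_{k+1}+2)$, so that each summand is dominated
by a geometrically decaying multiple of $a_{k+1}^2$. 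Hence
$D_2^2(S(\alpha,N)) \ll \sum_{k=1}^K \Sigma_k \asymp \sum_{k=1}^K a_k^2$. For the
matching \emph{lower bound} at $N=q_K$ I would use the second characterization of
$\xi_S$, namely $\xi_S(\alpha,q_K) = \Sigma_K \pm (\cdots)$, to recover the full
sum $\sum_{k=1}^K \Sigma_k \gg \sum_{k=1}^K a_k^2$, noting that at $N=q_K$ the
error terms are genuinely smaller than the main term (one single block $a_K^2$
cannot be cancelled).

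For the \emph{unsymmetrized lattice} the new feature is the additional main term
$N^{-1}\sum_{n=0}^{N-1}(T_n^2 + \tfrac12 T_n)$ in the second display of
Proposition~\ref{parsevalprop}. The quantity $T_n = \sum_{\ell=0}^n(\tfrac12 -
\{\ell\alpha\})$ is the classical Ostrowski/sawtooth sum, and the point is that
its average square over a period $N=q_K$ is governed by the alternating sum of
partial quotients: using the three-distance structure one has the standard
identity $N^{-1}\sum_{n} T_n \asymp \sum_{k=1}^K (-1)^k a_k$ for the mean and a
variance-type bound showing $N^{-1}\sum_n T_n^2 \asymp (\sum_{k=1}^K (-1)^k
a_k)^2 + \sum_{k=1}^K a_k^2$. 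Combining this with the $\Sigma_k \asymp a_k^2$
estimate for the second block then yields the claimed two-sided bound
\[
\sum_{k=1}^K a_k^2 + \Bigl(\sum_{k=1}^K (-1)^k a_k\Bigr)^2 \ll
D_2^2(L(\alpha,q_K)) \ll
\sum_{k=1}^K a_k^2 + \Bigl(\sum_{k=1}^K (-1)^k a_k\Bigr)^2 .
\]
\textbf{The main obstacle} I expect is precisely this analysis of
$N^{-1}\sum_n T_n^2$: establishing that the alternating sum $\sum_{k=1}^K
(-1)^k a_k$ is the \emph{exact} order of the mean $E_{q_K}$ (and controlling the
cross term between mean and fluctuation so that neither dominates spuriously)
requires the Ostrowski expansion of $T_n$ and a careful tracking of signs across
the continued fraction algorithm, rather than the purely size-based estimates
that suffice for the symmetric case. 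The symmetrization kills the $T_n$ term
entirely, which is exactly why the symmetric bound is the clean $\sum a_k^2$ with
no alternating-sum correction.
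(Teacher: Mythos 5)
Your upper bounds follow essentially the paper's route (Proposition \ref{parsevalprop} plus an evaluation of the Diophantine sum and a bound on the $T_n$-contribution), and the blockwise estimate $\Sigma_k\asymp a_k^2$ is true and usable there. The genuine gap is in the lower bounds. You assert that at $N=q_K$ ``the error terms are genuinely smaller than the main term,'' but this fails precisely in the most basic case of bounded partial quotients: the main term carries the tiny constant $1/(4\pi^4)\approx 1/390$ (or $1/360$ after the sharper evaluation \eqref{diophantineevaluation}) in front of $\sum_{k\le K}a_k^2$, while the error in Proposition \ref{parsevalprop} contains $\sum_{k=0}^{K-1}a_{k+1}/(2q_k)+O(1)$, which is \emph{not} $\ll 1$ as you claim (the $k=0$ term alone is $a_1/2$) and, more importantly, contributes on the order of $\sum_k a_k$ after all manipulations. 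When all $a_k=1$ one has $\sum a_k\asymp\sum a_k^2\asymp K$ with constant $1$ versus $1/360$, so the Parseval formula alone gives a \emph{negative} lower bound. The paper's essential extra ingredient, which your proposal is missing, is Roth's theorem: $D_2^2(S(\alpha,q_K))\gg\log q_K\gg K$ unconditionally, and a weighted average of this with the Parseval lower bound $D_2^2\ge c\sum a_k^2-CK$ (obtained by splitting the indices according to whether $a_k$ is bounded) yields $D_2^2(S(\alpha,q_K))\gg\sum a_k^2$. Without Roth (or some substitute absorbing the $O(K)$ deficit) your lower bound does not close.

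For the unsymmetrized lattice you also overclaim: you propose a two-sided bound $N^{-1}\sum_n T_n^2\asymp(\sum(-1)^ka_k)^2+\sum a_k^2$, but the lower bound on the ``variance'' $N^{-1}\sum_n(T_n-E_N)^2\gg\sum a_k^2$ is neither needed nor easy without growth assumptions on the $a_k$ (the paper's Lemma \ref{Tnlemma} is deliberately only an upper bound). The paper instead takes the variance term as $\ge 0$, gets $(\sum(-1)^ka_k)^2$ from $E_{q_K}^2$ via Beck's formula \eqref{EqK}, and obtains the $\sum a_k^2$ part of the lower bound from the Diophantine sum — again via the Roth trick above. Your identification of $E_{q_K}=\frac{1}{12}\sum(-1)^ka_k+O(1)$ as the key input for the alternating sum is correct (this is cited, not reproved, in the paper), but note it is an additive $O(1)$ statement, not the multiplicative equivalence ``$N^{-1}\sum_nT_n\asymp\sum(-1)^ka_k$'' you wrote, which is meaningless when the alternating sum vanishes.
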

\noindent We postpone the proofs to Sections \ref{section2.3} and \ref{section2.4}, and now comment on the main terms.

The contribution of the sums $T_n$ can be written as
\[ \frac{1}{N} \sum_{n=0}^{N-1} \left( T_n^2 + \frac{1}{2} T_n \right) = \frac{1}{N} \sum_{n=0}^{N-1} (T_n-E_N)^2 + E_N^2 + \frac{1}{2} E_N . \]
Observing a connection with Dedekind sums, Beck showed \cite[p.\ 79 and p.\ 91]{BE1} (see also \cite{SCH}) that for any $q_{K-1} \le N \le q_K$, the ``expected value'' $E_N$ is
\begin{equation}\label{EN}
E_N = \frac{1}{12} \sum_{k=1}^K (-1)^k a_k +O \left( \max_{1 \le k \le K} a_k \right) .
\end{equation}
For $N=q_K$, the error term can be improved to
\begin{equation}\label{EqK}
E_{q_K} = \frac{1}{12} \sum_{k=1}^K (-1)^k a_k +O(1) .
\end{equation}
Both implied constants are universal. Generalizing results of Beck, in a recent paper \cite{BO1} we proved that if $a_k \le c k^d$ with some constants $c>0$ and $d \ge 0$, then for any $q_{K-1} \le N \le q_K$, the ``variance'' is
\begin{equation}\label{Tnvariance}
\frac{1}{N} \sum_{n=0}^{N-1} (T_n-E_N)^2 = \sum_{m=1}^{q_K-1} \frac{1}{8 \pi^4 m^2 \| m \alpha \|^2} +O \left( \max_{|k-K| \ll \log K} a_k^2 \cdot (\log \log N)^4 \right)
\end{equation}
with implied constants depending only on $c$ and $d$. See also Lemma \ref{Tnlemma} below.

Finally, we will need two different evaluations of the Diophantine sum appearing in Proposition \ref{parsevalprop}. On the one hand, for general $\alpha$ we have \cite[p.\ 110]{BO3}, \cite{BO2}
\begin{equation}\label{diophantineevaluation}
\sum_{m=1}^{q_K-1} \frac{1}{m^2 \| m \alpha \|^2} = \frac{\pi^4}{90} \sum_{k=1}^K a_k^2 \pm 152 \sum_{k=1}^K a_k .
\end{equation}
On the other hand, Beck \cite[p.\ 176]{BE1} proved that if $\alpha$ is quadratic irrational, then for any $M \ge 1$,
\begin{equation}\label{diophantinesumbeck}
\sum_{m=1}^M \frac{1}{4 \pi^4 m^2 \| m \alpha \|^2} = c(\alpha) \log M +O(1)
\end{equation}
with some constant $c(\alpha)>0$ and an implied constant depending only on $\alpha$.

\subsection{Optimal lattices}\label{optimalsubsection}

In this section, we deduce Theorems \ref{optimalirrationaltheorem}, \ref{quadraticirrationaltheorem} and \ref{optimalrationaltheorem} from Propositions \ref{parsevalprop} and \ref{simpleparsevalprop}.

\begin{proof}[Proof of Theorem \ref{optimalirrationaltheorem}] Consider first the symmetrized lattice $S(\alpha, N)$. We will show the implications
\[ \frac{1}{K} \sum_{k=1}^K a_k^2 \ll 1 \,\, \Longrightarrow \,\, D_2(S(\alpha, N)) \ll \sqrt{\log N} \,\, \Longrightarrow \,\, D_2(S(\alpha, q_K)) \ll \sqrt{\log q_K} \,\, \Longrightarrow \,\, \frac{1}{K} \sum_{k=1}^K a_k^2 \ll 1 . \]
Assume that $K^{-1} \sum_{k=1}^K a_k^2 \ll 1$ as $K \to \infty$. By Proposition \ref{simpleparsevalprop}, for any $q_{K-1} \le N \le q_K$ we have $D_2^2 (S(\alpha, N)) \ll \sum_{k=1}^K a_k^2 \ll K \ll \log N$, as claimed. The second implication is trivial. Next, assume that $D_2 (S(\alpha, N)) \ll \sqrt{\log N}$ as $N \to \infty$. By Proposition \ref{simpleparsevalprop}, for $N=q_K$ we have
\[ \sum_{k=1}^K a_k^2 \ll D_2^2 (S(\alpha, q_K)) \ll \log q_K \le \sum_{k=1}^K \log (a_k+1) \ll \sum_{k=1}^K a_k \le \sqrt{K \sum_{k=1}^K a_k^2}, \]
and the claim follows. This finishes the proof of the equivalence for $S(\alpha, N)$.

Consider now the unsymmetrized lattice $L(\alpha, q_K)$. Assume that $K^{-1} \sum_{k=1}^K a_k^2 \ll 1$ and $K^{-1/2} \left| \sum_{k=1}^K (-1)^k a_k \right| \ll 1$ as $K \to \infty$. By Proposition \ref{simpleparsevalprop}, for $N=q_K$ we have
\[ D_2^2 (L(\alpha, q_K)) \ll \sum_{k=1}^K a_k^2 + \left( \sum_{k=1}^K (-1)^k a_k \right)^2 \ll K \ll \log q_K, \]
as claimed. Next, assume that $D_2(L(\alpha, q_K)) \ll \sqrt{\log q_K}$ as $K \to \infty$. By Proposition \ref{simpleparsevalprop}, for $N=q_K$ we have
\[ \sum_{k=1}^K a_k^2 + \left( \sum_{k=1}^K (-1)^k a_k \right)^2 \ll D_2^2(L(\alpha, q_K)) \ll \log q_K . \]
Hence both $\sum_{k=1}^K a_k^2 \ll \log q_K$ and  $\left( \sum_{k=1}^K (-1)^k a_k \right)^2 \ll \log q_K$. As above, the former estimate shows that $K^{-1} \sum_{k=1}^K a_k^2 \ll 1$. In particular, $\log q_K \le \sum_{k=1}^K \log (a_k+1) \ll \sum_{k=1}^K a_k^2 \ll K$, therefore $\left( \sum_{k=1}^K (-1)^k a_k \right)^2 \ll K$, as claimed. This finishes the proof of the equivalence for $L(\alpha, q_K)$.
\end{proof}

\begin{proof}[Proof of Theorem \ref{optimalrationaltheorem}] As Proposition \ref{simpleparsevalprop} applies to both rationals and irrationals, the proof is identical to that of Theorem \ref{optimalirrationaltheorem}.
\end{proof}

\begin{proof}[Proof of Theorem \ref{quadraticirrationaltheorem}] Let $\alpha$ be a quadratic irrational. By Proposition \ref{parsevalprop} and formula \eqref{diophantinesumbeck}, for any $q_{K-1} \le N \le q_K$,
\[ D_2^2 (S(\alpha, N)) = \sum_{m=1}^{q_K-1} \frac{1}{4 \pi^4 m^2 \| m \alpha \|^2} +O(1)=c(\alpha) \log N +O(1) , \]
as claimed. Using also formula \eqref{Tnvariance}, we similarly get
\[ D_2^2 (L(\alpha, N)) = \frac{3}{2} c(\alpha) \log N + E_N^2+\frac{1}{2} E_N +O((\log \log N)^4). \]
Formula \eqref{EN} shows that here $E_N= \frac{A(\alpha)}{12} K+O(1)=\frac{A(\alpha)}{12 \Lambda(\alpha)} \log N +O(1)$, and the claim follows.
\end{proof}

\subsection{Proof of Proposition \ref{parsevalprop}}\label{section2.3}

\begin{lem}\label{diophantinelemma}\hspace{1mm}
\begin{enumerate}
\item[(i)] For any $K \ge 1$,
\[ \sum_{m=1}^{q_K-1} \frac{1}{\pi^2 m^2 \| m \alpha \|} \le \sum_{k=0}^{K-1} \frac{a_{k+1}}{2 q_k} + 3.12. \]
\item[(ii)] For any $K \ge 1$ and $n \ge 0$,
\[ \sum_{m=q_K}^{\infty} \frac{1}{2 \pi^2 m^2} \min \left\{ \frac{1}{4 \| m \alpha \|^2}, n^2 \right\} \le 1.12 \frac{n}{q_K} + 0.61 \frac{n^2}{q_K^2}. \]
\item[(iii)] For any $K \ge 1$ and $N \ge q_{K-1}$,
\[ \sum_{m=1}^{q_K-1} \frac{1}{4 \pi^4 m^2 \| m \alpha \|^2} \min \left\{ \frac{1}{4 N \| 2 m \alpha \|}, 1 \right\} \le \frac{\zeta(3)}{16 \pi^4 N} \sum_{k=0}^{K-1} (a_{k+1}+2)^3 q_k+ 0.07. \]
\end{enumerate}
\end{lem}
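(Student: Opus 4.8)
The plan is to treat all three estimates with a single mechanism: partition the range of summation into the continued fraction blocks $q_k \le m < q_{k+1}$, and in each block separate the few \emph{resonant} indices $m$ close to a multiple of $q_k$ (where $\|m\alpha\|$ is small) from the \emph{generic} indices (where $\|m\alpha\|$ is of order $1$). Writing $\eta_k := \|q_k\alpha\|$, I will use throughout the standard facts $\frac{1}{q_{k+1}+q_k} < \eta_k < \frac{1}{q_{k+1}}$, the recursion $\eta_{k-1} = a_{k+1}\eta_k + \eta_{k+1}$, the best approximation property $\|m\alpha\| \ge \eta_{k-1}$ for $0 < m < q_k$, and the consequence $1/\eta_k \le q_{k+1}+q_k \le (a_{k+1}+2)q_k$. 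For the generic indices I will invoke the three-distance theorem in the form of the counting bound $\#\{1 \le m \le M : \|m\alpha\| < t\} \le 2tM + O(1)$, which after a dyadic decomposition in $m$ controls $\sum_{m \sim M} 1/\|m\alpha\|$ by $\ll M\log M$ and $\sum_{m \sim M} 1/\|m\alpha\|^2$ by $\ll M^2$.

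For part (i) this template is cleanest. In the block $q_k \le m < q_{k+1}$ I write $m = cq_k + s$ with $1 \le c \le a_{k+1}$ and $0 \le s < q_k$. The resonant terms $s=0$ satisfy $\|cq_k\alpha\| = c\eta_k$ (legitimate since $c\eta_k \le a_{k+1}\eta_k < \eta_{k-1} < 1/2$), so they contribute $\frac{1}{\pi^2 q_k^2 \eta_k}\sum_{c\ge 1} c^{-3} \le \frac{\zeta(3)}{\pi^2 q_k^2 \eta_k} \le \frac{\zeta(3)}{\pi^2}\cdot\frac{a_{k+1}+2}{q_k}$; since $\zeta(3)/\pi^2 < 1/2$ this is $\le \frac{a_{k+1}}{2q_k} + O(1/q_k)$, the main term. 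The generic terms ($s \ne 0$), summed dyadically in $m$, contribute $\ll (\log q_k)/q_k$ per block by the three-distance bound, and since the $q_k$ grow geometrically these errors sum over $k$ to a convergent series; carrying the explicit constants is what produces the numerical value $3.12$.

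Part (ii) is the same analysis applied to the tail $m \ge q_K$, with the truncation $\min\{1/(4\|m\alpha\|^2), n^2\}$ splitting the range at $\|m\alpha\| = 1/(2n)$. On $\{\|m\alpha\| \ge 1/(2n)\}$ the weight is $1/(4\|m\alpha\|^2)$, and a dyadic-in-$m$ estimate of $\sum 1/(m^2\|m\alpha\|^2)$ restricted to this range yields the linear term $\ll n/q_K$. On $\{\|m\alpha\| < 1/(2n)\}$ the weight is the constant $n^2$ over a sparse set; here the single index $m=q_K$ (where $\eta_K$ can be as small as $\approx 1/q_{K+1}$) gives the quadratic term $\le \frac{n^2}{2\pi^2 q_K^2}$, while its resonant relatives $m = cq_K$ and the remaining sparse indices feed back into the $n/q_K$ term. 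Matching the two regimes to the two terms $1.12\,n/q_K$ and $0.61\,n^2/q_K^2$ is then bookkeeping.

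The genuinely delicate case is part (iii), which I expect to be the main obstacle. The extra factor $\min\{1/(4N\|2m\alpha\|),1\}$ introduces a second family of resonances, the indices with $\|2m\alpha\|$ small, i.e.\ $m$ near a half-multiple of $q_k$, at which $\|m\alpha\|$ itself need not be small, so the two families must be disentangled. The strategy is a three-way split: (a) at the $\|m\alpha\|$-resonances $m = cq_k$ I bound $\min\{\cdot,1\} \le \frac{1}{4N\|2m\alpha\|} = \frac{1}{8Nc\eta_k}$, turning the summand into $\frac{1}{32\pi^4 N c^5 q_k^2 \eta_k^3}$, whose sum over $c$ together with $1/\eta_k^3 \le (a_{k+1}+2)^3 q_k^3$ gives the main term $\frac{\zeta(3)}{16\pi^4 N}(a_{k+1}+2)^3 q_k$ (one uses $\zeta(5) < 2\zeta(3)$ to reach the stated constant); (b) at the $\|2m\alpha\|$-resonances with $\|m\alpha\|$ generic I instead bound $\min\{\cdot,1\} \le 1$, so the summand is $\ll 1/m^2$ over a sparse set and contributes only $O(1/q_k^2)$; (c) on the true bulk, where both $\|m\alpha\|$ and $\|2m\alpha\|$ are of order $1$, the factor $\frac{1}{4N\|2m\alpha\|} = O(1/N)$ supplies the decay that $1/\|m\alpha\|^2$ alone would not, keeping the contribution negligible. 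The crux is that case (c) forces the $1/N$ branch rather than $\min\{\cdot,1\}\le 1$ (the latter produces a divergent bulk), whereas case (b) forces the opposite choice; getting this dichotomy right, and then tracking every constant down to $0.07$, is the hardest part.
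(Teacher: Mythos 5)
Your overall template (block decomposition $q_k \le m < q_{k+1}$, resonant versus generic indices) is indeed the paper's mechanism for (i) and (ii), but your identification of the resonant set is too small, and this creates a genuine gap. In part (i) you treat only $m=cq_k$ as resonant and claim the remaining terms contribute $\ll (\log q_k)/q_k$ per block via the counting bound $\#\{m\le M:\|m\alpha\|<t\}\le 2tM+O(1)$. This is not justified: inside the block $[q_k,q_{k+1})$ there are indices with $s\ne 0$ whose $\|m\alpha\|$ is as small as $\approx\eta_k\approx 1/q_{k+1}$ (for instance $m=aq_k\pm q_{k-1}$, for which $mp_k\equiv\mp(-1)^k\pmod{q_k}$ and $\|m\alpha\|=\eta_{k-1}\mp a\eta_k$), so your dyadic sum runs down to scale $1/q_{k+1}$, not $1/q_k$, and the $O(1)$ in the counting bound leaves the coefficient of $a_{k+1}/q_k$ uncontrolled; recovering the exact constant $1/2$ requires pinning down \emph{which} indices are near-resonant. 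The paper does this by splitting each block into sub-blocks $J_{k,a}=[aq_k,(a+1)q_k)\cap[q_k,q_{k+1})$ of length $q_k$ and observing that $mp_k$ runs over distinct residues mod $q_k$, so that exactly three residues ($mp_k\equiv 0,\pm1$) are dangerous and every other index satisfies $\|m\alpha\|\ge\frac12\|mp_k/q_k\|\ge 1/q_k$; the factor $3\cdot\sum_{a\ge1}a^{-2}/\pi^2=1/2$ is precisely where the coefficient $a_{k+1}/(2q_k)$ comes from. Your $\zeta(3)/\pi^2$ from the exact multiples alone does not account for the $\pm1$ classes.

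The more serious gap is in (iii), where your trichotomy (a) $m=cq_k$, (b) $\|2m\alpha\|$ small with $\|m\alpha\|$ generic, (c) both of order one, simply does not cover the indices with $\|m\alpha\|$ small (hence $\|2m\alpha\|=2\|m\alpha\|$ small as well) but $m$ not an exact multiple of $q_k$. For these the bound $\min\{\cdot,1\}\le 1$ is useless (it leaves $1/(4\pi^4m^2\|m\alpha\|^2)$, which over a block can be as large as a constant times $a_{k+1}^2$), so they must go through the $1/(8N\|m\alpha\|)$ branch, and then one needs to control $\sum_{q_k\le m<q_{k+1}}1/(m^2\|m\alpha\|^3)$. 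The paper handles this with a second, different estimate: the $\eta_k$-separation of the points $\{m\alpha\}$ for $m<q_{k+1}$ shows the values $\|m\alpha\|$ in a block are essentially $j\eta_k$, $j\ge 1$, each attained at most twice, giving $\le 2\zeta(3)/(q_k^2\eta_k^3)\le 2\zeta(3)(a_{k+1}+2)^3q_k$; this is where the $\zeta(3)$ in the stated main term comes from, and it already saturates the constant $\zeta(3)/(16\pi^4 N)$, so there is no room to treat the non-multiples as a separate error term. (The additive $0.07=2/(3\pi^2)$ is the contribution of all $m$ with $\|m\alpha\|>1/4$, where $\|2m\alpha\|\ne2\|m\alpha\|$.) Your part (ii) sketch has a milder version of the same issue: the $0.61\,n^2/q_K^2$ term arises from the resonant residues of \emph{every} block $k\ge K$ summed via $q_{K+\ell}\ge F_{\ell+1}q_K$, not from the single index $m=q_K$, and those contributions are quadratic in $n$, not linear as you assert.
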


\begin{proof} The proof of all three claims is based on the following simple observations. Let $k \ge 1$, or $k=0$ and $a_1>1$. For any integer $a \ge 1$ let $J_{k,a}=[aq_k, (a+1)q_k) \cap [q_k,q_{k+1})$ be a (possibly empty) index set. Let $\delta_k=q_k \alpha -p_k$, and recall from the general theory of continued fractions that $1/(q_{k+1}+q_k) \le |\delta_k|=\| q_k \alpha \| \le 1/q_{k+1}$. For any integer $m \in J_{k,a}$, we have $m \alpha = mp_k/q_k + m \delta_k/q_k$, and here the second term is negligible as $m|\delta_k|/q_k<1/q_k$. Since $p_k$ and $q_k$ are relatively prime, as $m$ runs in the index set $J_{k,a}$, the numbers $mp_k$ attain each mod $q_k$ residue class at most once. If $mp_k \not\equiv 0, \pm 1 \pmod{q_k}$, then
\[ \| m \alpha \| = \left\| \frac{mp_k}{q_k} + \frac{m \delta_k}{q_k} \right\| \ge \left\| \frac{mp_k}{q_k} \right\| - \frac{1}{q_k} \ge \frac{1}{2} \left\| \frac{mp_k}{q_k} \right\| . \]
Therefore for any nondecreasing function $f: [2,\infty ) \to [0,\infty )$, we have
\begin{equation}\label{fdiophantinesum}
\sum_{m \in J_{k,a}} f \left( \frac{1}{\| m \alpha \|} \right) \le 3 f \left( \frac{1}{\| q_k \alpha \|} \right) + \sum_{j=2}^{q_k-2} f \left( \frac{2}{\| j/q_k \|} \right) \le 3f \left( \frac{1}{\| q_k \alpha \|} \right) + 2 \sum_{2 \le j \le q_k/2} f \left( \frac{2q_k}{j} \right) .
\end{equation}
Note that $3 f(1/\| q_k \alpha \| )$ is an upper bound to the contribution of the three terms for which $m p_k \equiv 0, \pm 1 \pmod{q_k}$.

We also have the simpler estimate
\begin{equation}\label{fdiophantinesumsimple}
\sum_{1 \le m < q_{k+1}} f \left( \frac{1}{\| m \alpha \|} \right) \le 2 \sum_{1 \le j \le q_{k+1}/2} f \left( \frac{1}{j \| q_k \alpha \|} \right) .
\end{equation}
Indeed, consider the points $m \alpha \pmod{1}$, $1 \le m < q_{k+1}$ and the intervals $H_j=[j \| q_k \alpha \|, (j+1)\| q_k \alpha \|)$, $j \ge 1$ and $H_j = ((j-1) \| q \alpha \|, j \| q_k \alpha \| ]$, $j \le -1$. Since $\| (m_1-m_2) \alpha \| \ge \| q_k \alpha \|$ for any $m_1, m_2 \in [1,q_{k+1})$, $m_1 \neq m_2$, each interval $H_j$ contains at most one point $m \alpha \pmod{1}$, and \eqref{fdiophantinesumsimple} follows.

\noindent\textbf{(i)} Estimate \eqref{fdiophantinesum} yields
\[ \sum_{m \in J_{k,a}} \frac{1}{\pi^2 m^2 \| m \alpha \|} \le \frac{1}{\pi^2 a^2 q_k^2} \left( \frac{3}{\| q_k \alpha \|} + 2 \sum_{2 \le j \le q_k/2} \frac{2q_k}{j} \right) \le \frac{1}{\pi^2 a^2 q_k^2} \left( 3(q_{k+1}+q_k) + 4 q_k \log \frac{q_k}{2} \right) . \]
Summing over $a \ge 1$ and\footnote{If $a_1=1$, then the term $k=0$ can be removed.} $0 \le k \le K-1$ leads to
\[ \begin{split} \sum_{m=1}^{q_K-1} \frac{1}{\pi^2 m^2 \| m \alpha \|} &\le \sum_{k=0}^{K-1} \frac{3 q_{k+1} + 3q_k + 4 q_k \log (q_k/2)}{6 q_k^2} \\ &\le \sum_{k=0}^{K-1} \frac{a_{k+1}}{2 q_k} + \sum_{k=0}^{K-1} \frac{3+2\log (q_k/2)}{3 q_k} \\ &\le \sum_{k=0}^{K-1} \frac{a_{k+1}}{2 q_k} + \sum_{k=0}^{\infty} \frac{3+2 \log (F_{k+1}/2)}{3 F_{k+1}} , \end{split} \]
where $F_{k+1}$ are the Fibonacci numbers. The numerical value of the series in the previous line is $3.1195\dots$, as claimed.

\noindent\textbf{(ii)} Estimate \eqref{fdiophantinesum} yields
\[ \sum_{m \in J_{k,a}} \frac{1}{2 \pi^2 m^2} \min \left\{ \frac{1}{4 \| m \alpha \|^2}, n^2 \right\} \le \frac{1}{2 \pi^2 a^2 q_k^2} \left( 3n^2 + 2 \sum_{j=2}^{\infty} \min \left\{ \frac{q_k^2}{j^2} ,n^2 \right\} \right) \le \frac{1}{2 \pi^2 a^2 q_k^2} \left( 3n^2 + 4 n q_k \right) . \]
Note that the contribution of the terms $2 \le j \le \lfloor q_k/n \rfloor +1$ and $j \ge \lfloor q_k/n \rfloor +2$ is at most $n q_k$ each. Summing over $a \ge 1$ and $k \ge K$ leads to
\[ \sum_{m=q_K}^{\infty} \frac{1}{2 \pi^2 m^2} \min \left\{ \frac{1}{4 \| m \alpha \|^2}, n^2 \right\} \le \sum_{k=K}^{\infty} \frac{3n^2 + 4nq_k}{12 q_k^2} . \]
From the recursion satisfied by $q_k$ one readily sees that $q_{K+\ell} \ge F_{\ell+1} q_K$ for all $\ell \ge 0$, hence the right hand side of the previous formula is at most $c_1 n/q_K + c_2n^2/q_K^2$ with $c_1=\sum_{\ell =0}^{\infty} 1/(3F_{\ell +1}) = 1.1199\dots$ and $c_2=\sum_{\ell =0}^{\infty} 1/(4F_{\ell +1}^2)=0.6065\dots$, as claimed.

\noindent\textbf{(iii)} The contribution of all $m$ such that $\| m \alpha \| > 1/4$ is negligible:
\[ \sum_{\substack{1 \le m \le q_K-1 \\ \| m \alpha \| > 1/4}} \frac{1}{4 \pi^4 m^2 \| m \alpha \|^2} \min \left\{ \frac{1}{4 N \| 2 m \alpha \|}, 1 \right\} < \sum_{m=1}^{\infty} \frac{4}{\pi^4 m^2} = \frac{2}{3 \pi^2} . \]
On the other hand, $\| m \alpha \| \le 1/4$ implies $\| 2m \alpha \| = 2 \| m \alpha \|$, hence the contribution of all such terms is
\[ \sum_{\substack{1 \le m \le q_K-1 \\ \| m \alpha \| \le 1/4}} \frac{1}{4 \pi^4 m^2 \| m \alpha \|^2} \min \left\{ \frac{1}{4 N \| 2 m \alpha \|}, 1 \right\} \le \sum_{m=1}^{q_K-1} \frac{1}{32 \pi^4 N m^2 \| m \alpha \|^3} . \]
Estimate \eqref{fdiophantinesumsimple} gives
\[ \sum_{q_k \le m < q_{k+1}} \frac{1}{32 \pi^4 N m^2 \| m \alpha \|^3} \le \frac{1}{16 \pi^4 N q_k^2} \sum_{j=1}^{\infty} \frac{1}{j^3 \| q_k \alpha \|^3} \le \frac{\zeta(3) (a_{k+1}+2)^3 q_k}{16 \pi^4 N} . \]
Summing over $0 \le k \le K-1$, we thus obtain
\[ \sum_{m=1}^{q_K-1} \frac{1}{4 \pi^4 m^2 \| m \alpha \|^2} \min \left\{ \frac{1}{4 N \| 2 m \alpha \|}, 1 \right\} \le \sum_{k=0}^{K-1} \frac{\zeta(3) (a_{k+1}+2)^3 q_k}{16 \pi^4 N} + \frac{2}{3 \pi^2} . \]
Here $2/(3 \pi^2)=0.06754\dots$, as claimed.
\end{proof}

\begin{proof}[Proof of Proposition \ref{parsevalprop}] We give a detailed proof for the symmetrized lattice $S(\alpha, N)$, and then indicate at the end how to modify the proof for the unsymmetrized lattice $L(\alpha, N)$.

Let $B(x,y)=|S(\alpha, N) \cap ([0,x) \times [0,y))|$ denote the number of points of $S(\alpha, N)$ which fall into the box $[0,x) \times [0,y)$. Integrating on the strips $[0,1) \times [n/N,(n+1)/N)$ separately leads to
\[ D_2^2 (S(\alpha, N)) = \sum_{n=0}^{N-1} \int_0^1 \int_{\frac{n}{N}}^{\frac{n+1}{N}} \left( B(x,y) - 2N xy \right)^2 \, \mathrm{d}y \, \mathrm{d}x = M+R+\frac{4}{9} \]
with
\[ \begin{split} M&:=\frac{1}{N} \sum_{n=0}^{N-1} \int_0^1 \left( B \left( x, \frac{n+1}{N} \right) - 2(n+1)x \right)^2 \, \mathrm{d}x, \\ R&:= \frac{2}{N} \sum_{n=0}^{N-1} \int_0^1 \left( B \left( x, \frac{n+1}{N} \right) - 2(n+1)x \right) x \, \mathrm{d}x . \end{split} \]
The function
\[ B \left( x, \frac{n+1}{N} \right) -2(n+1)x = \sum_{\ell =0}^{n} \left( I_{[0,x)}(\{ \ell \alpha \}) + I_{[0,x)}(\{ -\ell \alpha \}) -2x \right) \]
is mean zero, and has Fourier coefficients
\[ \begin{split} \int_0^1 \left( B \left( x, \frac{n+1}{N} \right) -2(n+1)x \right) e^{-2 \pi i m x} \, \mathrm{d}x &= \sum_{\ell =0}^n \frac{\cos (2 \ell m \pi \alpha)}{\pi i m} \\ &= \frac{1}{2 \pi i m} \left( \frac{\sin ((2n+1)m \pi \alpha )}{\sin (m \pi \alpha )} + 1 \right) . \end{split} \]
The Fourier coefficients of $x$ are $\int_0^1 x e^{-2\pi i m x} \, \mathrm{d}x=-1/(2 \pi i m)$, thus by the Parseval formula we have
\[ R = \frac{2}{N} \sum_{n=0}^{N-1} 2 \sum_{m=1}^{\infty} \frac{1}{2 \pi i m} \left( \frac{\sin ((2n+1)m \pi \alpha )}{\sin (m \pi \alpha)} + 1 \right) \cdot \frac{-1}{2 \pi i m} = \frac{1}{N} \sum_{n=0}^{N-1} \sum_{m=1}^{\infty} \frac{\sin ((2n+1) m \pi \alpha )}{\pi^2 m^2 \sin (m \pi \alpha)} +\frac{1}{6} . \]
The Parseval formula similarly gives
\[ \begin{split} M &= \frac{1}{N} \sum_{n=0}^{N-1} 2 \sum_{m=1}^{\infty} \frac{1}{4 \pi^2 m^2} \left( \frac{(\sin((2n+1)m \pi \alpha)}{\sin (m \pi \alpha)} + 1 \right)^2 \\ &= \frac{1}{N} \sum_{n=0}^{N-1} \sum_{m=1}^{\infty} \frac{\sin^2((2n+1)m \pi \alpha)}{2\pi^2 m^2 \sin^2 (m \pi \alpha)} + \frac{1}{N} \sum_{n=0}^{N-1} \sum_{m=1}^{\infty} \frac{\sin ((2n+1)m \pi \alpha )}{\pi^2 m^2 \sin (m \pi \alpha )} + \frac{1}{12} . \end{split} \]
We can estimate the total error in the previous two formulas using
\[ \left| \frac{\sin ((2n+1)m \pi \alpha )}{\sin (m \pi \alpha)} \right| \le \min \left\{ \frac{1}{2 \| m \alpha \|}, 2n+1 \right\} \]
and Lemma \ref{diophantinelemma} (i) as
\[ \begin{split} \bigg| \frac{1}{N} \sum_{n=0}^{N-1} \sum_{m=1}^{\infty} \frac{2 \sin ((2n+1)m \pi \alpha )}{\pi^2 m^2 \sin (m \pi \alpha )} \bigg| &\le \frac{1}{N} \sum_{n=0}^{N-1} \left( \sum_{m=1}^{q_K-1} \frac{1}{\pi^2 m^2 \| m \alpha \|} + \sum_{m=q_K}^{\infty} \frac{2(2n+1)}{\pi^2 m^2} \right) \\ &\le \sum_{k=0}^{K-1} \frac{a_{k+1}}{2q_k} +3.12+ \frac{4N}{\pi^2 q_K} . \end{split} \]
By the assumption $N \le q_K$ and the fact $3.12+4/\pi^2+4/9+1/6+1/12 <4.22$, we thus obtain
\[ D_2^2 (S(\alpha, N)) = \frac{1}{N} \sum_{n=0}^{N-1} \sum_{m=1}^{\infty} \frac{\sin^2((2n+1)m \pi \alpha)}{2\pi^2 m^2 \sin^2 (m \pi \alpha)} \pm \left( \sum_{k=0}^{K-1} \frac{a_{k+1}}{2q_k} + 4.22 \right) . \]
Lemma \ref{diophantinelemma} (ii) estimates the tail of the infinite series in the previous formula as
\[ \sum_{m=q_K}^{\infty}  \frac{\sin^2((2n+1)m \pi \alpha)}{2\pi^2 m^2 \sin^2 (m \pi \alpha)} \le \sum_{m=q_K}^{\infty} \frac{1}{2 \pi^2 m^2} \min \left\{ \frac{1}{4 \| m \alpha \|^2}, (2n+1)^2 \right\} \le 1.12 \frac{2n+1}{q_K} + 0.61 \frac{(2n+1)^2}{q_K^2} . \]
By the assumption $N \le q_K$ and the facts $\sum_{n=0}^{N-1}(2n+1)^2 \le (4/3)N^3$ and $4.22+1.12+(4/3)\cdot 0.61<6.16$, we immediately get
\[ D_2^2 (S(\alpha, N)) = \frac{1}{N} \sum_{n=0}^{N-1} \sum_{m=1}^{q_K-1} \frac{\sin^2((2n+1)m \pi \alpha)}{2\pi^2 m^2 \sin^2 (m \pi \alpha)} \pm \left( \sum_{k=0}^{K-1} \frac{a_{k+1}}{2q_k} + 6.16 \right) . \]
Elementary calculations show that the function $1/\sin^2 (\pi x) - 1/(\pi^2 \| x \|^2)$ is increasing on $(0,1/2]$, hence $1/(\pi^2 \| x \|^2) \le 1/\sin^2 (\pi x) \le 1/(\pi^2 \| x \|^2) + 1-4/\pi^2$ for all $x$. The error of replacing $\sin^2 (m \pi \alpha)$ by $\pi^2 \| m \alpha \|^2$ in the denominator of the previous formula is thus at most
\[ \frac{1}{N} \sum_{n=0}^{N-1} \sum_{m=1}^{q_K-1} \frac{\sin^2 ((2n+1)m \pi \alpha) (1-4/\pi^2)}{2 \pi^2 m^2} \le \sum_{m=1}^{\infty} \frac{1-4/\pi^2}{2 \pi^2 m^2} = \frac{1-4/\pi^2}{12} . \]
Since $6.16+(1-4/\pi^2)/12 <6.21$, we obtain
\begin{equation}\label{dsquareformula}
D_2^2 (S(\alpha, N)) = \frac{1}{N} \sum_{n=0}^{N-1} \sum_{m=1}^{q_{K-1}-1} \frac{\sin^2((2n+1)m \pi \alpha)}{2\pi^4 m^2 \| m \alpha \|^2} + \xi_S(\alpha, N) \pm \left( \sum_{k=0}^{K-1} \frac{a_{k+1}}{2q_k} + 6.21 \right) ,
\end{equation}
where we define
\[ \xi_S(\alpha, N) := \frac{1}{N} \sum_{n=0}^{N-1} \sum_{m=q_{K-1}}^{q_K-1} \frac{\sin^2((2n+1)m \pi \alpha)}{2\pi^4 m^2 \| m \alpha \|^2} . \]
Using the trigonometric identity
\[ \frac{1}{N} \sum_{n=0}^{N-1} \sin^2 ((2n+1)x) = \frac{1}{2} - \frac{\sin(4Nx)}{4N\sin(2x)}, \]
the first term in \eqref{dsquareformula} simplifies to
\[ \sum_{m=1}^{q_{K-1}-1} \frac{1}{4 \pi^4 m^2 \| m \alpha \|^2} - \sum_{m=1}^{q_{K-1}-1} \frac{\sin (4Nm \pi \alpha)}{8 \pi^4 N m^2 \| m \alpha \|^2 \sin(2m\pi \alpha )} . \]
Here second term can be estimated using Lemma \ref{diophantinelemma} (iii) as
\[ \begin{split} \left| \sum_{m=1}^{q_{K-1}-1} \frac{\sin (4Nm \pi \alpha)}{8 \pi^4 N m^2 \| m \alpha \|^2 \sin(2m\pi \alpha )} \right| &\le \sum_{m=1}^{q_{K-1}-1} \frac{1}{4 \pi^4 m^2 \| m \alpha \|^2} \min \left\{ \frac{1}{4N \| 2m \alpha \|} , 1 \right\} \\ &\le \frac{\zeta(3)}{16 \pi^4 N} \sum_{k=0}^{K-2} (a_{k+1}+2)^3 q_k + 0.07. \end{split} \]
Therefore \eqref{dsquareformula} simplifies to
\[ D_2^2 (S(\alpha, N)) = \sum_{m=1}^{q_{K-1}-1} \frac{1}{4 \pi^4 m^2 \| m \alpha \|^2} + \xi_S(\alpha, N) \pm \left( \sum_{k=0}^{K-1} \frac{a_{k+1}}{2q_k} + \frac{\zeta(3)}{16 \pi^4 N} \sum_{k=0}^{K-2} (a_{k+1}+2)^3 q_k + 6.28  \right) , \]
and it remains to prove the properties of $\xi_S(\alpha, N)$. Clearly, $0 \le \xi_S(\alpha, N) \le \sum_{m=q_{K-1}}^{q_K-1} \frac{1}{2 \pi^4 m^2 \| m \alpha \|^2}$. On the other hand, repeating arguments from above and from Lemma \ref{diophantinelemma} (iii), we can also write
\[ \begin{split} \xi_S(\alpha, N) &= \sum_{m=q_{K-1}}^{q_K-1} \frac{1}{4 \pi^4 m^2 \| m \alpha \|^2} - \sum_{m=q_{K-1}}^{q_K-1} \frac{\sin (4Nm \pi \alpha)}{8 \pi^4 N m^2 \| m \alpha \|^2 \sin(2m\pi \alpha )} \\ &= \sum_{m=q_{K-1}}^{q_K-1} \frac{1}{4 \pi^4 m^2 \| m \alpha \|^2} \pm \sum_{m=q_{K-1}}^{q_K-1} \frac{1}{4 \pi^4 m^2 \| m \alpha \|^2} \min \left\{ \frac{1}{4N \| 2m \alpha \|}, 1 \right\} \\ &= \sum_{m=q_{K-1}}^{q_K-1} \frac{1}{4 \pi^4 m^2 \| m \alpha \|^2} \pm \left( \frac{\zeta(3)}{16 \pi^4 N} (a_K+2)^3 q_{K-1} +0.07 \right) . \end{split} \]
This finishes the proof for $S(\alpha, N)$.

The proof for $L(\alpha, N)$ is entirely analogous. The only difference is that the number of points $B(x,y):=|L(\alpha, N) \cap ([0,x) \times [0,y))|$ which fall into the box $[0,x) \times [0,y)$ satisfies
\[ B \left( x, \frac{n+1}{N} \right) - (n+1) x = \sum_{\ell =0}^n \left( I_{[0,x)}(\{ \ell \alpha \}) -x \right) , \]
which is not a mean zero function. Its integral ($0$th Fourier coefficient) is
\[ \int_0^1 \left( B \left( x, \frac{n+1}{N} \right) - (n+1) x \right) \, \mathrm{d} x = \sum_{\ell =0}^n \left( \frac{1}{2} - \{ \ell \alpha \} \right) =T_n, \]
which introduces the extra terms $N^{-1} \sum_{n=0}^{N-1} T_n/2$ resp.\ $N^{-1} \sum_{n=0}^{N-1} T_n^2$ when the Parseval formula is applied to the analogue of $R$ resp.\ $M$ as above. For the convenience of the reader we mention that the analogue of formula \eqref{dsquareformula} is
\[ \begin{split} D_2^2 (L(\alpha, N)) = &\frac{1}{N} \sum_{n=0}^{N-1} \left( T_n^2+\frac{1}{2} T_n \right) + \frac{1}{N} \sum_{n=0}^{N-1} \sum_{m=1}^{q_{K-1}-1} \frac{\sin^2((n+1)m \pi \alpha)}{2\pi^4 m^2 \| m \alpha \|^2} \\ &+ \xi_L(\alpha, N) \pm \left( \sum_{k=0}^{K-1} \frac{a_{k+1}}{8q_k} + 2.78 \right) , \end{split} \]
where
\[ \xi_L (\alpha, N) := \frac{1}{N} \sum_{n=0}^{N-1} \sum_{m=q_{K-1}}^{q_K-1} \frac{\sin^2((n+1)m \pi \alpha)}{2\pi^4 m^2 \| m \alpha \|^2} . \]
\end{proof}

\subsection{Proof of Proposition \ref{simpleparsevalprop}}\label{section2.4}

The following lemma is a simpler form of formula \eqref{Tnvariance}, but it applies without any assumption on the partial quotients. As modifying the proof of \eqref{Tnvariance} is not entirely straightforward, we include the details.
\begin{lem}\label{Tnlemma} For any $K \ge 1$,
\[ \frac{1}{q_K} \sum_{n=0}^{q_K-1} (T_n-E_{q_K})^2 \ll \sum_{k=1}^K a_k^2 \]
with a universal implied constant.
\end{lem}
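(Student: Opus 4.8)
The plan is to apply the Fourier--Parseval method of Proposition~\ref{parsevalprop} directly to the scalar sequence $(T_n)$. First I would expand $T_n$ into a Fourier series in the variable $n$. Starting from $\frac12-\{x\}=\sum_{m=1}^\infty \frac{\sin(2\pi m x)}{\pi m}$, summing over $0\le \ell\le n$, and using the product-to-sum identity $\sin(\pi m n\alpha)\sin((n+1)\pi m\alpha)=\frac12\big(\cos(\pi m\alpha)-\cos((2n+1)\pi m\alpha)\big)$ yields
\[ T_n=\kappa-\sum_{m=1}^{\infty}\frac{\cos((2n+1)\pi m\alpha)}{2\pi m\sin(\pi m\alpha)}, \qquad \kappa:=\sum_{m=1}^\infty\frac{\cot(\pi m\alpha)}{2\pi m}, \]
the constant $\kappa$ collecting the $n$-independent part; this is the same type of manipulation that produced the $(2n+1)$-arguments in the proof of Proposition~\ref{parsevalprop}. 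Writing $c_m:=-1/(2\pi m\sin(\pi m\alpha))$, so that $T_n-\kappa=\sum_{m\ge1}c_m\cos((2n+1)\pi m\alpha)$, the quantity in the Lemma is the variance of $(T_n)$ over $n\in\{0,\dots,q_K-1\}$.

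Next I would compute this variance by squaring and averaging. With $\psi(j):=\frac{1}{q_K}\sum_{n=0}^{q_K-1}\cos((2n+1)\pi j\alpha)$, the geometric sum gives $\psi(0)=1$ and $|\psi(j)|\le\min\{1,\,1/(2q_K\|j\alpha\|)\}$ for $j\ne0$, and two applications of product-to-sum express the variance exactly as
\[ \frac{1}{q_K}\sum_{n=0}^{q_K-1}(T_n-E_{q_K})^2=\sum_{m,m'\ge1}c_mc_{m'}\Big[\tfrac12\big(\psi(m-m')+\psi(m+m')\big)-\psi(m)\psi(m')\Big]. \]
The diagonal $m=m'$ is bounded by $\sum_{m\ge1}c_m^2\le(16\pi^2)^{-1}\sum_{m\ge1}m^{-2}\|m\alpha\|^{-2}$ (using $|\sin\pi m\alpha|\ge2\|m\alpha\|$); restricted to $m<q_K$ this is, by the evaluation \eqref{diophantineevaluation}, at most $(16\pi^2)^{-1}\big(\tfrac{\pi^4}{90}\sum_{k=1}^K a_k^2+152\sum_{k=1}^K a_k\big)\ll\sum_{k=1}^K a_k^2$, where I use $a_k\le a_k^2$ and $K\le\sum_k a_k^2$ to absorb the linear error. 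The off-diagonal terms with $m\ne m'$ and $m,m'<q_K$ I would bound by absolute values, exploiting the decay of $\psi$ and the same geometric partial-quotient sums as in Lemma~\ref{diophantinelemma}.

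The main obstacle is the high-frequency range $m\ge q_K$, together with the off-diagonal terms coupling it to low frequencies. Here the diagonal $\sum_{m\ge q_K}c_m^2\asymp\sum_{m\ge q_K}m^{-2}\|m\alpha\|^{-2}$ already diverges, so the series defining $T_n-\kappa$ converges only conditionally and the finiteness of the variance is forced by cancellation rather than by absolute convergence; this is precisely where the proof of \eqref{Tnvariance} invoked the regularity hypothesis $a_k\le ck^d$, which must now be dropped. The mechanism I would exploit is aliasing: since $\|q_K\alpha\|\le1/q_{K+1}$, over the $q_K$ sample points the frequencies $m$ and $m+q_K$ are nearly synchronized, while $\sin(\pi(m+q_K)\alpha)=(-1)^{p_K}\sin(\pi m\alpha)+O(\|q_K\alpha\|)$ forces $|c_{m+q_K}|\ll\frac{m}{m+q_K}|c_m|$, so the high-frequency contributions fold back onto $m<q_K$ with a summable coefficient. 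Making this rigorous---by Abel summation in $m$ to exploit the conditional convergence, and controlling the resulting boundary and error sums with the geometric estimates of Lemma~\ref{diophantinelemma}, with extra care at the resonant scale $m\approx q_{K-1}$ where $\|m\alpha\|$ and $\|q_K\alpha\|$ become comparable---is the technical heart of the argument. The essential simplification over \eqref{Tnvariance} is that only an upper bound is required, so once the folding is set up, crude termwise majorization after the cancellation suffices, and the surviving contribution is again $\ll\sum_{m<q_K}m^{-2}\|m\alpha\|^{-2}\ll\sum_{k=1}^K a_k^2$.
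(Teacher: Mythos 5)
Your proposal correctly identifies where the difficulty lies, but it does not resolve it, and this unresolved part is precisely the content of the lemma. Two concrete problems. First, the very decomposition $T_n=\kappa-\sum_{m\ge1}c_m\cos((2n+1)\pi m\alpha)$ with $\kappa=\sum_m\cot(\pi m\alpha)/(2\pi m)$ is not available for a general irrational: without any hypothesis on the partial quotients the series defining $\kappa$ need not converge (its convergence is a delicate Brjuno-type condition), and only the \emph{combined} series $\sum_m\bigl(\cos(\pi m\alpha)-\cos((2n+1)\pi m\alpha)\bigr)/(2\pi m\sin(\pi m\alpha))$ makes sense. Second, and more seriously, the ``aliasing/folding'' mechanism for the range $m\ge q_K$ is asserted rather than proved. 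The inequality $|c_{m+q_K}|\ll\frac{m}{m+q_K}|c_m|$ requires $\|m\alpha\|\gg\|q_K\alpha\|$, which fails exactly at the resonant frequencies you flag (e.g.\ $m$ near multiples of $q_{K-1}$ or $q_K$ when $a_{K+1}$ is small), and the Abel-summation argument that is supposed to convert conditional convergence into the bound $\ll\sum_{m<q_K}m^{-2}\|m\alpha\|^{-2}$ is exactly the step that forced the regularity hypothesis $a_k\le ck^d$ in \eqref{Tnvariance}. Saying that ``crude termwise majorization after the cancellation suffices'' is not a proof that the cancellation can be extracted without such a hypothesis; as it stands the proposal has a genuine gap at its technical heart.

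The paper's proof avoids all of this by a different first move: replace $\alpha$ by the convergent $p_K/q_K$ at the outset. Since $|\{\ell\alpha\}-\{\ell p_K/q_K\}|<1/q_K$ for $0\le\ell\le q_K-1$, one has $T_n=T_n^*+O(1)$ where $T_n^*$ is the corresponding sum for the rational $p_K/q_K$, and it suffices to bound the variance of $T_n^*$. This makes the harmonic analysis \emph{finite}: $\frac12-\frac1{2q}-\{x/q\}$ has an exact Fourier expansion on $\mathbb{Z}_{q}$ with $q-1$ nonzero coefficients, and when the variance is expanded the off-diagonal terms vanish identically because $\sum_{n=0}^{q-1}e^{2\pi i(m_1-m_2)(n+1)p/q}=0$ for $m_1\ne m_2$ in $\{1,\dots,q-1\}$. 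Only the diagonal survives, and it is $\ll\sum_{m=1}^{q-1}m^{-2}\|mp/q\|^{-2}\ll\sum_{k=1}^Ka_k^2$ by \eqref{diophantineevaluation}. There is no infinite tail, no conditionally convergent series, and no resonance analysis. If you want to salvage your route you would essentially have to reprove \eqref{Tnvariance} without the growth hypothesis, which is a much harder task than the lemma requires; I recommend adopting the rational-approximation reduction instead.
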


\begin{proof} For the sake of readability, set $p=p_K$ and $q=q_K$. For any integer $1 \le \ell \le q-1$, we have $\| \ell p/q \| \ge 1/q$ and $|\ell \alpha - \ell p/q| \le q |\alpha -p/q| < 1/q$. Thus there is no integer between $\ell p/q$ and $\ell \alpha$, hence
\[ \left| \{ \ell \alpha \} - \left\{ \frac{\ell p}{q} \right\} \right| \le \left| \ell \alpha - \frac{\ell p}{q} \right| < \frac{1}{q} . \]
Consequently, for all $0 \le n \le q-1$,
\[ T_n=\sum_{\ell=0}^n \left( \frac{1}{2} - \{ \ell \alpha \} \right) = \sum_{\ell =0}^n \left( \frac{1}{2} - \frac{1}{2q} - \left\{ \frac{\ell p}{q} \right\} \right) + O(1) . \]
Introducing
\[ T_n^* := \sum_{\ell =0}^n \left( \frac{1}{2} - \frac{1}{2q} - \left\{ \frac{\ell p}{q} \right\} \right) \quad \textrm{and} \quad E_q^* := \frac{1}{q} \sum_{n=0}^{q-1} T_n^*, \]
we thus have $T_n -E_q = T_n^* - E_q^*+O(1)$. Therefore $q^{-1} \sum_{n=0}^{q-1} (T_n-E_q)^2 \ll q^{-1} \sum_{n=0}^{q-1} (T_n^*-E_q^*)^2 +1$, and it remains to estimate the latter.

The rest of the proof is based on Fourier analysis on the finite cyclic group $\mathbb{Z}_q$, which we identify by $\{ 0,1,\dots, q-1 \}$. Elementary calculations show that
\[ \sum_{x=0}^{q-1} \left( \frac{1}{2} - \frac{1}{2q} - \left\{ \frac{x}{q} \right\} \right) e^{-2 \pi i m x/q} = \left\{ \begin{array}{ll} 0 & \textrm{if } m=0, \\ 1/(1-e^{-2 \pi i m/q}) & \textrm{if } 1 \le m \le q-1 . \end{array} \right. \]
Therefore by Fourier inversion on $\mathbb{Z}_q$,
\[ \frac{1}{2} - \frac{1}{2q} - \left\{ \frac{x}{q} \right\} = \frac{1}{q} \sum_{m=1}^{q-1} \frac{e^{2 \pi i m x/q}}{1-e^{-2 \pi i m /q}} , \qquad x \in \mathbb{Z} . \]
We can thus write $T_n^*$ as
\[ T_n^* = \frac{1}{q} \sum_{m=1}^{q-1} \sum_{\ell =0}^n \frac{e^{2 \pi i m \ell p /q}}{1-e^{-2 \pi i m/q}} = \frac{1}{q} \sum_{m=1}^{q-1} \frac{1-e^{2 \pi i m (n+1)p/q}}{(1-e^{- 2 \pi i m/q}) (1-e^{2 \pi i m p/q})} . \]
Letting $B=q^{-1} \sum_{m=1}^{q-1} 1/(1-e^{-2 \pi i m/q})(1-e^{2 \pi i mp/q})$, we have
\[ \frac{1}{q} \sum_{n=0}^{q-1} (T_n^*-E_q^*)^2 \le \frac{1}{q} \sum_{n=0}^{q-1} |T_n^*-B|^2 = \frac{1}{q} \sum_{n=0}^{q-1} \frac{1}{q^2} \left| \sum_{m=1}^{q-1} \frac{e^{2 \pi i m (n+1)p/q}}{(1-e^{-2 \pi i m/q})(1-e^{2 \pi i m p/q})} \right|^2 . \]
Expanding the square shows that here
\[ \begin{split} \bigg| \sum_{m=1}^{q-1} &\frac{e^{2 \pi i m (n+1)p/q}}{(1-e^{-2 \pi i m/q})(1-e^{2 \pi i m p/q})} \bigg|^2 \\ & \hspace{20mm} = \sum_{m=1}^{q-1} \frac{1}{|1-e^{-2 \pi i m/q}|^2 |1-e^{2 \pi i m p/q}|^2} \\ & \hspace{25mm} + \sum_{\substack{m_1, m_2=1 \\ m_1 \neq m_2}}^{q-1} \frac{e^{2 \pi i (m_1-m_2)(n+1)p/q}}{(1-e^{- 2 \pi i m_1 /q})(1-e^{2 \pi i m_1 p/q})(1-e^{2 \pi i m_2 /q})(1-e^{-2 \pi i m_2 p/q})}. \end{split} \]
As $\sum_{n=0}^{q-1} e^{2 \pi i (m_1-m_2)(n+1)p/q} =0$ for all $m_1 \neq m_2$, the contribution of the off-diagonal terms is zero. Formula \eqref{diophantineevaluation} thus leads to
\[ \frac{1}{q} \sum_{n=0}^{q-1} (T_n^*-E_q^*)^2 \le \frac{1}{q^2} \sum_{m=1}^{q-1} \frac{1}{|1-e^{-2 \pi i m/q}|^2 |1-e^{2 \pi i mp/q}|^2} \ll \sum_{m=1}^{q-1} \frac{1}{m^2 \| mp/q \|^2} \ll \sum_{k=1}^K a_k^2, \]
as claimed.
\end{proof}

\begin{proof}[Proof of Proposition \ref{simpleparsevalprop}] By Proposition \ref{parsevalprop}, for any $q_{K-1} \le N \le q_K$ we have
\[ D_2^2(S(\alpha, N)) \ll \sum_{m=1}^{q_K-1} \frac{1}{m^2 \| m \alpha \|^2} + \sum_{k=0}^{K-1} \frac{a_{k+1}}{q_k} + \sum_{k=0}^{K-2} \frac{a_{k+1}^3 q_k}{N} . \]
Here $a_{k+1}^3 q_k /N \le a_{k+1}^2$, hence formula \eqref{diophantineevaluation} yields $D_2^2(S(\alpha, N)) \ll \sum_{k=1}^K a_k^2$, as claimed. Using Lemma \ref{Tnlemma} and formula \eqref{EqK} we also deduce that for $N=q_K$,
\[ \frac{1}{q_K} \sum_{n=0}^{q_K-1} \left( T_n^2+\frac{1}{2} T_n \right) = \frac{1}{q_K} \sum_{n=0}^{q_K-1} (T_n-E_{q_K})^2 + E_{q_K}^2 + \frac{1}{2} E_{q_K} \ll \sum_{k=1}^K a_k^2 + \left( \sum_{k=1}^K (-1)^k a_k \right)^2 , \]
and the upper bound for $D_2^2(L(\alpha, q_K))$ follows.

Next, we prove the lower bounds. Let $c>0$ resp.\ $C>0$ denote suitably small resp.\ large universal constants whose values change from line to line. By Proposition \ref{parsevalprop} and formula \eqref{diophantineevaluation}, for $N=q_K$ we have
\[ \begin{split} D_2^2 (S(\alpha, q_K)) &\ge \sum_{m=1}^{q_K-1} \frac{1}{4 \pi^4 m^2 \| m \alpha \|^2} - \frac{\zeta (3)}{16 \pi^4 q_K} \sum_{k=0}^{K-1} (a_{k+1}+2)^3 q_k - C \sum_{k=1}^K a_k \\ &\ge \left( \frac{1}{360} - \frac{\zeta (3)}{16 \pi^4} \right) \sum_{k=1}^K a_k^2 -C \sum_{k=1}^K a_k . \end{split} \]
The point is that $1/360>\zeta (3)/(16 \pi^4)$, i.e.\ the coefficient of $a_k^2$ is positive. The contribution of all $k$ such that $a_k \ll 1$ is $\ll K$, and for all other terms $a_k^2$ dominates $a_k$. Therefore $D_2^2 (S(\alpha, q_K)) \ge c \sum_{k=1}^K a_k^2 -CK$. On the other hand, by Roth's theorem we also have $D_2^2 (S(\alpha, q_K)) \gg \log q_K \gg K$. Taking a suitable weighted average of the previous two inequalities establishes the lower bound $D_2^2(S(\alpha, q_K)) \ge c \sum_{k=1}^K a_k^2$.

From Proposition \ref{parsevalprop} we similarly deduce
\[ D_2^2(L(\alpha, q_K)) \ge \frac{1}{q_K} \sum_{n=1}^{q_K-1} \left( T_n-E_{q_K} \right)^2 + E_{q_K}^2 + c \sum_{k=1}^K a_k^2 . \]
Here $q_K^{-1} \sum_{n=0}^{q_K-1} (T_n-E_{q_K})^2 \ge 0$, and the lower bound for $D_2^2(L(\alpha, q_K))$ follows from formula \eqref{EqK}.
\end{proof}

\subsection{Proof of Remark \ref{LalphaNremark}}

Let $\alpha$ be an irrational such that $a_k \ll \sqrt{k}/\log^2 k$. For any $q_{K-1} \le N \le q_K$ we then have $\max_{|k-K| \ll \log K} a_k^2 \cdot (\log \log N)^4 \ll K$, hence formulas \eqref{Tnvariance} and \eqref{diophantineevaluation} give
\[ \frac{1}{N} \sum_{n=0}^{N-1} (T_n-E_N)^2 = \sum_{m=1}^{q_K-1} \frac{1}{8 \pi^4 m^2 \| m \alpha \|^2} +O(K) \ll \sum_{k=1}^K a_k^2 . \]
Using this fact instead of Lemma \ref{Tnlemma} in the proof of Proposition \ref{simpleparsevalprop}, we deduce that $D_2^2(L(\alpha, N)) \ll \sum_{k=1}^K a_k^2 + (\sum_{k=1}^K (-1)^k a_k)^2$ holds for all $q_{K-1} \le N \le q_K$ (instead of only for $N=q_K$). In particular, the equivalence stated in Remark \ref{LalphaNremark} follows.

\section{Typical irrationals}\label{typicalirrationalsection}

\subsection{Asymptotics almost everywhere}

Let us recall certain basic facts about the statistics of the partial quotients of a typical irrational number. Let $\varphi$ be a positve nondecreasing function on $(0,\infty)$, and let $A_K=\max_{1 \le k \le K} a_k$. It is well known that for a.e.\ $\alpha$ we have $\log q_k \sim \frac{\pi^2}{12 \log 2} k$, and that $a_k \le \varphi (k)$ for all but finitely many $k$ if and only if $\sum_{n=1}^{\infty} 1/\varphi (n)< \infty$. A classical result of Diamond and Vaaler \cite{DV} on trimmed sums states that for a.e.\ $\alpha$,
\begin{equation}\label{diamondvaaler}
\frac{\sum_{k=1}^K a_k - A_K}{K \log K} \to \frac{1}{\log 2} \qquad \textrm{as } K \to \infty .
\end{equation}

\begin{proof}[Proof of Theorem \ref{aeasymptotictheorem}] For any $N \ge 2$, let $K_N(\alpha)$ be the positive integer for which $q_{K_N(\alpha) -1} < N \le q_{K_N(\alpha )}$. In particular, for a.e.\ $\alpha$ we have $K_N(\alpha ) \sim \frac{12 \log 2}{\pi^2} \log N$, where $\frac{12 \log 2}{\pi^2}=0.8427\dots$.

\noindent\textbf{(i)} Assume that $\sum_{n=1}^{\infty} 1/\varphi(n)<\infty$. As observed in the Introduction, by a classical discrepancy estimate for the sequence $\{ n \alpha \}$ \cite[p.\ 52]{DT}, we have
\[ \begin{split} &D_2 (S(\alpha, N)) \ll D_{\infty} (L(\alpha, N)) \ll \sum_{k=1}^{K_N(\alpha)} a_k, \\ &D_2 (L(\alpha, N)) \ll D_{\infty} (L(\alpha, N)) \ll \sum_{k=1}^{K_N(\alpha)} a_k . \end{split} \]
The asymptotic relation \eqref{diamondvaaler} of Diamond and Vaaler shows that for a.e.\ $\alpha$,
\[ \begin{split} &D_2 (S(\alpha, N)) \le C \sum_{k=1}^{K_N(\alpha)} a_k = C A_{K_N(\alpha)} + O(K_N(\alpha) \log K_N(\alpha)) , \\ &D_2 (L(\alpha, N)) \le C \sum_{k=1}^{K_N(\alpha)} a_k = C A_{K_N(\alpha)} +  O(K_N(\alpha) \log K_N(\alpha) ) \end{split} \]
with a universal constant $C>0$. Here $A_{K_N(\alpha)} \le \varphi(K_N(\alpha))$ and $K_N(\alpha) \le \log N$ for all but finitely many $N$. Therefore $D_2(S(\alpha, N)) \le C \varphi (\log N)+O(\log N \log \log N)$ and $D_2(L(\alpha, N)) \le C \varphi (\log N)+O(\log N \log \log N)$ with implied constants depending only on $\alpha$ and $\varphi$. The factor $C$ can be removed by repeating the argument with $\varphi(x)/C$ instead of $\varphi(x)$.

\noindent\textbf{(ii)} Assume that $\sum_{n=1}^{\infty} 1/\varphi(n)=\infty$. By Proposition \ref{simpleparsevalprop}, we have
\[ D_2(S(\alpha, q_K)) \ge c \bigg( \sum_{k=1}^K a_k^2 \bigg)^{1/2} \ge c A_K \quad \textrm{and} \quad D_2(L(\alpha, q_K)) \ge c \bigg( \sum_{k=1}^K a_k^2 \bigg)^{1/2} \ge c A_K \]
with a universal constant $c>0$. Here $A_K \ge \varphi(K)$ for infinitely many $K$, and $K \ge (\log q_K)/2$ for all but finitely many $K$. Hence $D_2(S(\alpha, q_K)) \ge c \varphi ((\log q_K)/2)$ and $D_2(L(\alpha, q_K)) \ge c \varphi ((\log q_K)/2)$ for infinitely many $K$. Repeating the argument with $\varphi (2x)/c$ instead of $\varphi (x)$, we deduce that $D_2(S(\alpha, q_K)) \ge \varphi (\log q_K)$ and $D_2(L(\alpha, q_K)) \ge \varphi (\log q_K)$ for infinitely many $K$, as claimed.
\end{proof}

\subsection{Limit distribution}\label{limitdistributionsection}

Let $\lambda$ be the Lebesgue measure, and $\nu (B)= (1/\log 2) \int_B 1/(1+x) \, \mathrm{d} x$ ($B \subseteq [0,1]$ Borel) the Gauss measure. If $\alpha$ is chosen randomly from $[0,1]$ with distribution $\nu$, then its partial quotients are identically distributed random variables with distribution
\[ \nu \left( \left\{ \alpha \in [0,1] \, : \, a_k = n \right\} \right) = \frac{1}{\log 2} \log \left( 1+\frac{1}{n(n+2)} \right) , \qquad k,n \ge 1. \]
If $\alpha$ is chosen randomly from $[0,1]$ with distribution either $\lambda$ or $\nu$, then the sequence $a_k$ is $\psi$-mixing with exponential rate \cite[p.\ 119]{IK}.

To find the limit distribution of $D_2^2(S(\alpha, N))/\log^2 N$, we shall need more sophisticated facts about the partial quotients of a typical irrational, which we now gather. Most importantly, a special case of a limit distribution theorem of Samur \cite{SA} (see also \cite{BO1}) states that if $\mu$ is a Borel probability measure on $[0,1]$ which is absolutely continuous with respect to the Lebesgue measure, then for any $t \ge 0$,
\begin{equation}\label{Samur}
\mu \left( \left\{ \alpha \in [0,1] \, : \, \frac{2 \log^2 2}{\pi K^2} \sum_{k=1}^K a_k^2 \le t \right\} \right) \to \int_0^t \frac{e^{-1/(2x)}}{\sqrt{2 \pi} x^{3/2}} \, \mathrm{d}x \qquad \textrm{as } K \to \infty .
\end{equation}
If $\mu$ is either $\lambda$ or $\nu$, then general results of Heinrich \cite{HE} on $\psi$-mixing random variables imply the rate of convergence
\begin{equation}\label{Heinrich1}
\sup_{t \ge 0} \left| \mu \left( \left\{ \alpha \in [0,1] \, : \, \frac{2 \log^2 2}{\pi K^2} \sum_{k=1}^K a_k^2 \le t \right\} \right) - \int_0^t \frac{e^{-1/(2x)}}{\sqrt{2 \pi} x^{3/2}} \, \mathrm{d}x \right| \ll \frac{1}{K^{1-\varepsilon}}
\end{equation}
with an arbitrary $\varepsilon >0$ and an implied constant depending only on $\varepsilon$. The corresponding result for $\sum_{k=1}^K a_k$ in the Gauss measure is also due to Heinrich:
\[ \sup_{t \in \mathbb{R}} \left| \nu \left( \left\{ \alpha \in [0,1] \, : \, \frac{1}{K} \sum_{k=1}^K a_k - \frac{\log K -\gamma}{\log 2} \le t \right\} \right) - F(t) \right| \ll \frac{\log^2 K}{K} , \]
where $\gamma$ is the Euler--Mascheroni constant, and $F(t)$ is the distribution function of the law with characteristic function
\[ \int_{\mathbb{R}} e^{itx} \, \mathrm{d}F(t) = \exp \left( - \frac{\pi}{2 \log 2} |x| \left( 1+\frac{2i}{\pi} \mathrm{sgn}(x) \log |x| \right) \right) . \]
Note that this is a stable law with stability parameter $1$ (and skewness parameter $1$). Hence $1-F(t) \ll 1/t$ as $t \to \infty$, and we immediately obtain
\begin{equation}\label{sumakbound}
\nu \left( \left\{ \alpha \in [0,1] \, : \, \frac{1}{K} \sum_{k=1}^K a_k \ge t + \frac{\log K}{\log 2} \right\} \right) \ll \frac{1}{t} + \frac{\log^2 K}{K} \qquad \textrm{as } t \to \infty .
\end{equation}
The final ingredient is a similar estimate for the convergent denominators: with a large enough universal constant $C>0$,
\begin{equation}\label{qKbound}
\nu \left( \left\{ \alpha \in [0,1] \, : \, \left| \log q_K - \frac{\pi^2}{12 \log 2} K \right| \ge C \sqrt{K \log K} \right\} \right) \ll \frac{1}{\sqrt{K}} .
\end{equation}
This follows from the fact that $\log q_K$ satisfies the central limit theorem with rate $O(1/\sqrt{K})$, as shown by Morita \cite{MO}. We mention that a better upper bound can be deduced from the large deviation inequality of Takahasi \cite{TA}, but \eqref{qKbound} suffices for our purposes.

\begin{proof}[Proof of Theorem \ref{irrationallimitdistributiontheorem}] Throughout the proof, $C>0$ is a large universal constant whose value changes from line to line, and $Y_i=Y_i(\alpha, N)$, $i=1,2,\ldots$ are error terms. For any $N \ge 2$, let $K_N(\alpha)$ be the positive integer for which $q_{K_N(\alpha) -1} < N \le q_{K_N(\alpha )}$.

Proposition \ref{parsevalprop} and formula \eqref{diophantineevaluation} show that we can write
\[ D_2^2 (S(\alpha, N)) = \frac{1}{360} \sum_{k=1}^{K_N(\alpha)-1} a_k^2 + Y_1, \quad \textrm{where} \quad |Y_1| \le \frac{1}{180} a_{K_N(\alpha)}^2+ C \sum_{k=1}^{K_N(\alpha)} a_k + \frac{C}{N} \sum_{k=0}^{K_N(\alpha) -2} a_{k+1}^3 q_k . \]
Using the general fact $q_{k+2}/q_k \ge 2$, we estimate the last error term as
\[ \begin{split} \frac{1}{N} \sum_{k=0}^{K_N(\alpha)-2} a_{k+1}^3 q_k &\le \frac{1}{N} \sum_{k=1}^{K_N(\alpha)-1} a_k^2 q_k \\ &\le \sum_{k=1}^{K_N(\alpha) - 100 \log K_N(\alpha)} a_k^2 \frac{q_k}{q_{K_N(\alpha)-1}} + \sum_{k=K_N(\alpha) - 100 \log K_N(\alpha)}^{K_N(\alpha) -1} a_k^2 \frac{q_k}{q_{K_N(\alpha) -1}} \\ &\le \frac{1}{K_N(\alpha)^{10}} \sum_{k=1}^{K_N(\alpha )} a_k^2 + \sum_{k=K_N(\alpha) - 100 \log K_N(\alpha)}^{K_N(\alpha) -1} a_k^2 . \end{split} \]
This leads to the simplified form $D_2^2(S(\alpha, N)) = (1/360) \sum_{k=1}^{K_N(\alpha)} a_k^2 + Y_2$, where
\[ |Y_2| \le \frac{C}{K_N(\alpha)^{10}} \sum_{k=1}^{K_N(\alpha)} a_k^2 + C\sum_{k=K_N(\alpha) -100 \log K_N(\alpha)}^{K_N(\alpha)} a_k^2 + C\sum_{k=1}^{K_N(\alpha)} a_k .  \]
Set $\overline{K}=\lceil \frac{12 \log 2}{\pi^2} \log N \rceil$. The estimate \eqref{qKbound} states that
\[ \nu \left( \left\{ \alpha \in [0,1] \, : \, \left| \log q_{\overline{K}} - \frac{\pi^2}{12 \log 2} \overline{K} \right| \ge C \sqrt{\overline{K} \log \overline{K}} \right\} \right) \ll \frac{1}{\sqrt{\overline{K}}} . \]
By the definition of $K_N(\alpha)$ and $\overline{K}$, this immediately gives
\[ \nu \left( \left\{ \alpha \in [0,1] \, : \, |K_N(\alpha ) - \overline{K}| \ge C \sqrt{\overline{K} \log \overline{K}} \right\} \right) \ll \frac{1}{\sqrt{\overline{K}}} . \]
Roughly speaking, this means that we can replace $K_N(\alpha)$ by $\overline{K}$ in the above formulas; the point is that the latter does not depend on $\alpha$. More precisely, outside a set of $\nu$-measure $\ll 1/\sqrt{\overline{K}}$, we have $D_2^2(S(\alpha, N)) = (1/360) \sum_{k=1}^{\overline{K}} a_k^2+Y_3$, where
\[ |Y_3| \le \frac{C}{\overline{K}^{10}} \sum_{k=1}^{2 \overline{K}} a_k^2 + C \sum_{k=\overline{K}-C\sqrt{\overline{K} \log \overline{K}}}^{\overline{K} + C \sqrt{\overline{K} \log \overline{K}}} a_k^2 +C \sum_{k=1}^{2\overline{K}} a_k . \]
Since $5 \pi^3 /\log^2 N = 720 \log^2 2 /(\pi \overline{K}^2) +O(1/\overline{K}^3)$, normalizing the previous formula leads to the fact that outside a set of $\nu$-measure $\ll 1/\sqrt{\overline{K}}$,
\[ 5 \pi^3 \frac{D_2^2 (S(\alpha, N))}{\log^2 N} = \frac{2 \log^2 2}{\pi \overline{K}^2} \sum_{k=1}^{\overline{K}} a_k^2 +Y_4 , \]
where
\[ |Y_4| \le \frac{C}{\overline{K}^3} \sum_{k=1}^{2 \overline{K}} a_k^2 + \frac{C}{\overline{K}^2} \sum_{k=\overline{K}-C\sqrt{\overline{K} \log \overline{K}}}^{\overline{K} + C \sqrt{\overline{K} \log \overline{K}}} a_k^2 +\frac{C}{\overline{K}^2} \sum_{k=1}^{2\overline{K}} a_k . \]
We now estimate the three error terms in the previous formula. The limit distribution with rate of Heinrich \eqref{Heinrich1} gives
\[ \nu \left( \left\{ \alpha \in [0,1] \, : \, \frac{1}{\overline{K}^3} \sum_{k=1}^{2 \overline{K}} a_k^2 \ge \frac{1}{\overline{K}^{1/3}} \right\} \right) \ll \int_{\mathrm{const} \cdot \overline{K}^{2/3}}^{\infty} \frac{e^{-1/(2x)}}{\sqrt{2 \pi} x^{3/2}} \, \mathrm{d} x + \frac{1}{\overline{K}^{1-\varepsilon}} \ll \frac{1}{\overline{K}^{1/3}} . \]
Since the sequence $a_k$ is strictly stationary, we similarly deduce
\[ \begin{split} \nu \Bigg( \Bigg\{ \alpha \in [0,1] \, : \, \frac{1}{\overline{K}^2} \sum_{k=\overline{K}-C \sqrt{\overline{K} \log \overline{K}}}^{\overline{K}+C \sqrt{\overline{K} \log \overline{K}}} &a_k^2 \ge \frac{(\log \overline{K})^{1/3}}{\overline{K}^{1/3}} \Bigg\} \Bigg) \\ &= \nu \left( \left\{ \alpha \in [0,1] \, : \, \frac{1}{\overline{K}^2} \sum_{k=1}^{C \sqrt{\overline{K} \log \overline{K}}} a_k^2 \ge \frac{(\log \overline{K})^{1/3}}{\overline{K}^{1/3}} \right\} \right) \\ &\ll \int_{\mathrm{const} \cdot \overline{K}^{2/3}/(\log \overline{K})^{2/3}}^{\infty} \frac{e^{-1/(2x)}}{\sqrt{2 \pi} x^{3/2}} \, \mathrm{d} x + \frac{1}{\overline{K}^{1/2-\varepsilon}} \\ &\ll \frac{(\log \overline{K})^{1/3}}{\overline{K}^{1/3}}. \end{split} \]
Finally, formula \eqref{sumakbound} gives
\[ \nu \left( \left\{ \alpha \in [0,1] \, : \, \frac{1}{\overline{K}^2} \sum_{k=1}^{2\overline{K}} a_k \ge \frac{1}{\overline{K}^{1/3}} \right\} \right) \ll \frac{1}{\overline{K}^{2/3}} . \]
By the previous three estimates, we can finally write
\begin{equation}\label{D2inmeasure}
5 \pi^3 \frac{D_2^2 (S(\alpha, N))}{\log^2 N} = \frac{2 \log^2 2}{\pi \overline{K}^2} \sum_{k=1}^{\overline{K}} a_k^2 +Y_5,
\end{equation}
where
\begin{equation}\label{Y5bound}
\nu \left( \left\{ \alpha \in [0,1] \, : \, |Y_5| \ge C \frac{(\log \overline{K})^{1/3}}{\overline{K}^{1/3}} \right\} \right) \le C \frac{(\log \overline{K})^{1/3}}{\overline{K}^{1/3}} .
\end{equation}

The proof of the theorem is now immediate. Assume first, that $\mu$ is absolutely continuous with respect to the Lebesgue measure. The theorem of Samur \eqref{Samur} ensures that the main term in \eqref{D2inmeasure} converges in distribution to the standard L\'evy distribution as $N$, and hence $\overline{K}$, goes to infinity. Since $Y_5 \to 0$ in $\nu$-measure, the same holds also in $\mu$-measure, and the convergence to the standard L\'evy distribution remains true for the left hand side of \eqref{D2inmeasure}. This finishes the proof for a general absolutely continuous measure $\mu$.

Next, let $\mu$ be either $\lambda$ or $\nu$. Then the sequence $a_k$ is $\psi$-mixing with exponential rate, and the limit distribution with rate of Heinrich \eqref{Heinrich1} ensures that the main term in \eqref{D2inmeasure} converges to the standard L\'evy distribution with rate $\ll 1/\overline{K}^{1-\varepsilon}$. The estimate \eqref{Y5bound}, which holds also with $\lambda$ in place of $\nu$, together with the trivial fact that the distribution function of the L\'evy distribution is Lipschitz, shows that this convergence remains true for the left hand side of \eqref{D2inmeasure} with the rate $\ll (\log \overline{K})^{1/3} / \overline{K}^{1/3}$. This finishes the proof of the rate of convergence for $\lambda$ and $\nu$.
\end{proof}

\section{Typical rationals}\label{typicalrationalsection}

Let $F_Q$ denote the set of all reduced fractions in $[0,1]$ with denominator at most $Q$, and let us write every $p/q \in F_Q$ in the form $p/q=[0;a_1, \ldots, a_r]$. It does not matter which of the two possible expansions is chosen. Note that the partial quotients $a_1=a_1(p/q), \ldots, a_r=a_r(p/q)$ as well as the length $r=r(p/q)$ is a function of $p/q$. For the sake of simplicity, we use the convention $a_k=0$ if $k>r$.

The proof of Theorem \ref{rationallimitdistributiontheorem} is based on recent results of Bettin and Drappeau on the limit distribution of power sums of the partial quotients; they are perfect analogues of the results for typical irrationals mentioned in Section \ref{limitdistributionsection}.
\begin{lem}[Bettin--Drappeau \cite{BD}]\label{BDlemma} For any $Q \ge 2$ and $\varepsilon>0$,
\begin{equation}\label{Bettin1}
\sup_{t \ge 0} \left| \frac{1}{|F_Q|} \left| \left\{ \frac{p}{q} \in F_Q \, : \, \frac{\pi^3}{72 \log^2 Q} \sum_{k=1}^r a_k^2 \le t \right\} \right| - \int_0^t \frac{e^{-1/(2x)}}{\sqrt{2 \pi} x^{3/2}} \, \mathrm{d} x \right| \ll \frac{1}{(\log Q)^{1-\varepsilon}}
\end{equation}
and
\[ \sup_{t \in \mathbb{R}} \left| \frac{1}{|F_Q|} \left| \left\{ \frac{p}{q} \in F_Q \, : \, \frac{1}{\log Q} \sum_{k=1}^r a_k - \frac{\log \log Q - \gamma}{\pi^2/12} \le t \right\} \right| - G(t) \right| \ll \frac{1}{(\log Q)^{1-\varepsilon}} \]
with implied constants depending only on $\varepsilon$. Here $\gamma$ is the Euler--Mascheroni constant, and $G(t)$ is the distribution function of the law with characteristic function
\[ \int_{\mathbb{R}} e^{itx} \, \mathrm{d} G(t) = \exp \left( - \frac{6}{\pi} |x| \left( 1 + \frac{2i}{\pi} \mathrm{sgn}(x) \log |x| \right) \right) . \]
\end{lem}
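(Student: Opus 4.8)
The plan is to obtain both statements as specializations of the general limit theorem of Bettin and Drappeau for additive functionals $\sum_{k=1}^{r} f(a_k)$ of the partial quotients of a random reduced fraction $p/q \in F_Q$. The engine behind their result is the spectral analysis of a two-parameter family of transfer operators $\mathcal{L}_{s,w}$ attached to the Gauss map (equivalently, to the Euclidean algorithm), where $s$ is dual to the size constraint $\log q \le \log Q$ and $w$ is dual to the statistic $\sum_k f(a_k)$. Encoding the empirical distribution over $F_Q$ as a Dirichlet series over rationals, one expresses the relevant characteristic function in terms of the leading eigenvalue $\lambda(s,w)$ of $\mathcal{L}_{s,w}$; a quasi-compactness and spectral-gap argument isolates this eigenvalue, and a Tauberian or contour-integration step converts the singularity structure of $\lambda(s,w)$ near the critical parameter into the limit law. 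The type of that law is dictated entirely by the Gauss--Kuzmin tail $\mathbb{P}(a_1 = n) \asymp n^{-2}$, that is, by the tail exponent of $f$ under this law.

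First I would treat $f(a) = a^2$. Since $\mathbb{P}(a_1^2 \ge x) \asymp x^{-1/2}$, the summand lies in the domain of attraction of a one-sided stable law of index $1/2$, which is precisely the standard L\'evy distribution with density $e^{-1/(2x)}/(\sqrt{2\pi}\,x^{3/2})$ on $(0,\infty)$. To pin down the scaling I would match against the already-established irrational analogue \eqref{Samur}: there $\tfrac{2\log^2 2}{\pi K^2}\sum_{k=1}^K a_k^2$ converges to the L\'evy law, and for a random fraction the mean length satisfies $r \approx \tfrac{12\log 2}{\pi^2}\log q$, with $\log q \approx \log Q$ for typical $p/q \in F_Q$. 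Substituting $K \mapsto \tfrac{12\log 2}{\pi^2}\log Q$ into $\tfrac{2\log^2 2}{\pi K^2}$ yields exactly $\tfrac{\pi^3}{72\log^2 Q}$, the normalization in \eqref{Bettin1}, and the same substitution fixes all constants.

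Next I would treat $f(a) = a$. Now $\mathbb{P}(a_1 \ge n) \asymp n^{-1}$, placing $\sum_k a_k$ in the domain of attraction of a stable law of index $1$. Such laws require a logarithmic recentering, which is the source of the term $(\log\log Q - \gamma)/(\pi^2/12)$, and their characteristic function has the shape $\exp(-c|x|(1 + \tfrac{2i}{\pi}\mathrm{sgn}(x)\log|x|))$. The scale $c = 6/\pi$ and the centering constant $\pi^2/12$ are again determined by the length asymptotics $r \approx \tfrac{12\log 2}{\pi^2}\log Q$: rescaling the Gauss-measure result stated just before the lemma (scale $\tfrac{\pi}{2\log 2}$) by the factor $\tfrac{12\log 2}{\pi^2}$ that relates $1/K$ to $1/\log Q$ turns $\tfrac{\pi}{2\log 2}$ into $\tfrac{6}{\pi}$, with the $\log|x|$ correction transforming consistently and the resulting additive shift absorbed into the centering.

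The rate $\ll (\log Q)^{-(1-\varepsilon)}$ is the quantitative output of the same machinery: an effective spectral gap for $\mathcal{L}_{s,w}$, uniform in $w$ on compact sets and in a neighbourhood of the critical $s$, produces a power-saving error in the contour estimate and hence a Berry--Esseen-type bound. I expect the main obstacle to be precisely this effective spectral step, namely establishing the meromorphic continuation of $(\mathrm{Id}-\mathcal{L}_{s,w})^{-1}$ and controlling the perturbation of $\lambda(s,w)$ uniformly as $w \to 0$, where the heavy tails of $f$ render $\lambda(s,w)$ non-smooth in $w$; this is where both the correct stable limit and the explicit constants are extracted simultaneously. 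For the present paper, however, all of this is imported wholesale from \cite{BD}, so the only genuine work is the constant-matching described above.
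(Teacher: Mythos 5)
Your identification of the source, the mechanism (transfer operators for the Euclidean algorithm, stable laws of index $1/2$ and $1$ dictated by the Gauss--Kuzmin tail), and the constants is sound; the normalization checks $\frac{2\log^2 2}{\pi K^2}\big|_{K=\frac{12\log 2}{\pi^2}\log Q}=\frac{\pi^3}{72\log^2 Q}$ and $\frac{\pi}{2\log 2}\cdot\frac{12\log 2}{\pi^2}=\frac{6}{\pi}$ are exactly right. For the second displayed estimate (the sum $\sum_k a_k$) the paper indeed does nothing more than cite \cite{BD}, so there your account is adequate.

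The genuine gap is in the rate for \eqref{Bettin1}. You assert that the bound $\ll(\log Q)^{-(1-\varepsilon)}$ is ``imported wholesale from \cite{BD}'' and that ``the only genuine work is the constant-matching.'' But \cite{BD} states this result only with the much weaker rate $\ll(\log\log Q)^{-(1-\varepsilon)}$, and the entire content of the paper's Section \ref{section4.1} is the upgrade. Concretely, the paper applies \cite[Theorem 1.1]{BD} to $\phi(x)=\lfloor 1/x\rfloor^2$ to get the characteristic function in the form $\exp\left(U(t)\log Q+O(|t|^{1/2-\varepsilon}+Q^{-\delta})\right)$, and then must show that
\[
U(t)=-\frac{6\sqrt{2}\,|t|^{1/2}}{\pi^{3/2}}\left(1-i\,\mathrm{sgn}(t)\right)+O\!\left(|t|^{1-\varepsilon}\right),
\]
i.e.\ that the error beyond the leading stable term $|t|^{1/2}$ saves a \emph{power} of $|t|$. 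This is done by hand: removing the integer part from $\int_1^\infty (e^{it\lfloor x\rfloor^2}-1)/(x^2+x)\,\mathrm{d}x$ at cost $O(t\log(1/t))$, rescaling, and comparing to the Fresnel integral $\int_0^\infty (e^{ix^2}-1)/x^2\,\mathrm{d}x=(i-1)\sqrt{2\pi}/2$ at cost $O(\sqrt{t}\log(1/t))$. Only then does a Berry--Esseen argument with cutoff $T=\log Q$ deliver $(\log Q)^{-(1-2\varepsilon)}$; with merely $U(t)=-c|t|^{1/2}(1-i\,\mathrm{sgn}(t))+o(|t|^{1/2})$ one cannot do better than the $\log\log$ rate. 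Your proposal locates the difficulty vaguely in ``the effective spectral step,'' but the spectral gap only produces the harmless $Q^{-\delta}$ term; the bottleneck is the regularity of $U$ at $t=0$, which is an explicit oscillatory-integral computation, not something the cited theorem hands you. A secondary point: matching constants against \eqref{Samur} via $r\approx\frac{12\log 2}{\pi^2}\log Q$ is a consistency check, not a derivation --- the rational ensemble is not obtained from the irrational one by substitution, and in the paper the constant $\pi^3/72$ falls out of the Fresnel evaluation above.
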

\noindent The second limit distribution in Lemma \ref{BDlemma} immediately yields
\begin{equation}\label{Bettin2}
\frac{1}{|F_Q|} \left| \left\{ \frac{p}{q} \in F_Q \, : \, \frac{1}{\log Q} \sum_{k=1}^r a_k \ge t + \frac{\log \log Q}{\pi^2/12} \right\} \right| \ll \frac{1}{t} + \frac{1}{(\log Q)^{1-\varepsilon}} \quad \textrm{as } t \to \infty .
\end{equation}
Note that \eqref{Bettin1} was stated in \cite{BD} with the rate $\ll 1/(\log \log Q)^{1-\varepsilon}$, but the methods of that paper actually give $\ll 1/(\log Q)^{1-\varepsilon}$. For the sake of completeness, we deduce \eqref{Bettin1} as stated here in Section \ref{section4.1}. We now prove a lemma which will serve as a substitute for the fact that the partial quotients are not exactly identically distributed, and then prove Theorem \ref{rationallimitdistributiontheorem}.

\begin{lem}\label{partialquotientlemma} For any positive integers $Q,k,t$, we have
\[ \left| \left\{ \frac{p}{q} \in F_Q \, : \, a_k \ge t \right\} \right| \le \frac{2Q^2}{t} . \]
\end{lem}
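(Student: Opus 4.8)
The plan is to group the fractions $p/q\in F_Q$ with $a_k\ge t$ according to the prefix $(a_1,\dots,a_{k-1})$ of their continued fraction, and to count them cylinder by cylinder. For a fixed admissible prefix the fractions sharing it fill the continued fraction cylinder $\Delta(a_1,\dots,a_{k-1})$, the interval with endpoints $p_{k-1}/q_{k-1}$ and $(p_{k-1}+p_{k-2})/(q_{k-1}+q_{k-2})$ and length $1/(q_{k-1}(q_{k-1}+q_{k-2}))$; distinct prefixes give disjoint cylinders inside $[0,1]$. Writing $D=tq_{k-1}+q_{k-2}$ for the denominator of $[0;a_1,\dots,a_{k-1},t]$, the extra condition $a_k\ge t$ restricts to the subinterval bounded by $p_{k-1}/q_{k-1}$ and $(tp_{k-1}+p_{k-2})/D$; since $p_{k-1}q_{k-2}-p_{k-2}q_{k-1}=\pm1$, these two endpoints are Farey neighbours.

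First I would count the reduced fractions of denominator at most $Q$ lying strictly between these neighbours. By the Stern--Brocot description of an interval between Farey neighbours, each is uniquely $(mp_{k-1}+n(tp_{k-1}+p_{k-2}))/(mq_{k-1}+nD)$ with coprime integers $m,n\ge1$, and the pair $(m,n)$ is recoverable from the fraction because the defining matrix is unimodular. Dropping coprimality and assigning to each admissible pair the unit square $[m-1,m)\times[n-1,n)$, these disjoint squares lie in the triangle $\{x,y\ge0:\ xq_{k-1}+yD\le Q\}$ of area $Q^2/(2q_{k-1}D)$, so at most $Q^2/(2q_{k-1}D)$ such fractions occur. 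Summing over prefixes and using $D\ge\frac t2(q_{k-1}+q_{k-2})$ together with the disjointness of the cylinders yields
\[ \sum_{a_1,\dots,a_{k-1}}\frac{Q^2}{2q_{k-1}D}\ \le\ \frac{Q^2}{t}\sum_{a_1,\dots,a_{k-1}}\frac{1}{q_{k-1}(q_{k-1}+q_{k-2})}\ \le\ \frac{Q^2}{t}, \]
which is the main term.

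It then remains to add the endpoints $(tp_{k-1}+p_{k-2})/D$ themselves, i.e.\ the fractions with $a_k=t$ exactly, one per prefix with $D\le Q$. Such a prefix is determined by the coprime pair $(q_{k-1},q_{k-2})$ with $q_{k-2}<q_{k-1}\le Q/t$, so there are at most $\sum_{b\le Q/t}b\le\frac12\frac Qt(\frac Qt+1)$ of them; since the whole count is vacuous unless $Q\ge t$, this is at most $Q^2/t$. Combining with the main term gives the bound $2Q^2/t$, the constant $2$ being exactly the slack needed to absorb these boundary fractions.

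The step I expect to be the main obstacle is precisely this bookkeeping of the boundary: passing from areas to lattice point counts and accounting for the endpoint fractions without losing the clean constant. Conceptually, the one point that must not be mishandled is the summation over prefixes: summing $1/q_{k-1}^2$ over all coprime pairs $(q_{k-1},q_{k-2})$ would diverge, and what rescues the argument is that summing over genuine length-$(k-1)$ prefixes amounts to summing disjoint cylinder lengths, which is at most $1$.
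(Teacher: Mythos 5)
Your argument is correct, and it shares the paper's overall strategy --- condition on the prefix $(a_1,\dots,a_{k-1})$, bound the count inside each cylinder, and sum using the fact that the disjoint cylinder lengths $1/(q_{k-1}(q_{k-1}+q_{k-2}))$ add up to at most $1$ --- but the per-cylinder count is done by a genuinely different method. The paper first settles $k=1$ by the trivial observation that $a_1\ge t$ forces $p/q\le 1/t$ (at most $q/t$ numerators per denominator), and then handles $k\ge 2$ by the supermultiplicative property $\mathrm{denom}([0;a_1,\dots,a_r])\ge \mathrm{denom}([0;a_1,\dots,a_{k-1}])\cdot\mathrm{denom}([0;a_k,\dots,a_r])$, which reduces each cylinder to an instance of the $k=1$ case with $Q$ replaced by $Q/\mathrm{denom}(\text{prefix})$; the factor $2$ then comes from $\sum 1/\mathrm{denom}^2\le 2$. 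You instead parametrize the reduced fractions between the Farey neighbours $p_{k-1}/q_{k-1}$ and $(tp_{k-1}+p_{k-2})/D$ explicitly via the Stern--Brocot mediant construction and count lattice points under the line $mq_{k-1}+nD\le Q$ by area, which is sharper per cylinder ($Q^2/(2q_{k-1}D)$ versus the paper's $Q^2/(tq_{k-1}^2)$), at the cost of the separate bookkeeping for the boundary fractions with $r=k$, $a_k=t$; your factor $2$ arises from absorbing those endpoints rather than from the cylinder-length constant. All the individual steps check out: the inequality $D\ge\tfrac{t}{2}(q_{k-1}+q_{k-2})$ holds because $q_{k-1}\ge q_{k-2}$, the prefix is indeed recoverable from the coprime pair $(q_{k-1},q_{k-2})$ for a fixed length $k-1$, and the endpoint count $\tfrac12\tfrac{Q}{t}(\tfrac{Q}{t}+1)\le Q^2/t$ is valid for all $Q,t\ge 1$. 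The paper's route is shorter; yours is more explicit and self-contained but requires more care at the boundary.
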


\begin{proof} Assume first, that $k=1$. Note that $a_1 \ge t$ implies that $0 < p/q \le 1/t$. In particular, for each $1 \le q \le Q$ there are at most $q/t$ possible numerators $p$, hence
\begin{equation}\label{k=1case}
\left| \left\{ \frac{p}{q} \in F_Q \, : \, a_1 \ge t \right\} \right| \le \sum_{q=1}^Q \frac{q}{t} \le \frac{Q^2}{t} .
\end{equation}
Next, assume that $k \ge 2$. Let $\mathrm{denom}(x)$ denote the denominator of a rational $x$ (in its reduced form). From the recursion satisfied by the denominator of the convergents one readily deduces the supermultiplicative property
\[ \mathrm{denom}([0;a_1, \ldots, a_r]) \ge \mathrm{denom}([0;a_1, \ldots,a_{k-1}]) \cdot \mathrm{denom}([0;a_k, \ldots , a_r]) . \]
For any fixed positive integers $b_1, \ldots, b_{k-1}$ we thus obtain
\[ \left| \left\{ \frac{p}{q} \in F_Q \, : \, a_1=b_1, \ldots, a_{k-1}=b_{k-1}, \,\, a_k \ge t \right\} \right| \le \left| \left\{ \frac{p}{q} \in F_{Q/\mathrm{denom}([0;b_1, \ldots, b_{k-1}])} \, : \, a_1 \ge t \right\} \right| . \]
Summing over $b_1, \ldots, b_{k-1}$ and applying \eqref{k=1case} leads to
\[ \left| \left\{ \frac{p}{q} \in F_Q \, : \, a_k \ge t \right\} \right| \le \sum_{b_1, \ldots, b_{k-1}=1}^{\infty} \frac{Q^2}{t (\mathrm{denom}([0;b_1, \dots, b_{k-1}]))^2} . \]
Recall that the set of real numbers $[0;c_1, c_2, \ldots ] \in [0,1]$ such that $c_1=b_1, \ldots, c_{k-1}=b_{k-1}$ is an interval whose length is at least $1/(2 \, \mathrm{denom}([0;b_1, \ldots, b_{k-1}])^2)$. Since these are pairwise disjoint intervals, we have
\[ \sum_{b_1, \ldots, b_{k-1}=1}^{\infty} \frac{1}{(\mathrm{denom}([0;b_1, \ldots, b_{k-1}]))^2} \le 2, \]
and the claim follows.
\end{proof}

\begin{proof}[Proof of Theorem \ref{rationallimitdistributiontheorem}] Throughout the proof, $C>0$ is a large universal constant whose value changes from line to line, and $Z_i=Z_i (p/q)$, $i=1,2$ are error terms.

Proposition \ref{parsevalprop} and formula \eqref{diophantineevaluation} show that we can write
\[ D_2^2 (S(p/q,q)) = \frac{1}{360} \sum_{k=1}^r a_k^2 +Z_1, \quad \textrm{where} \quad |Z_1| \le C \sum_{k=1}^r a_k + \frac{C}{q} \sum_{k=0}^{r-1} a_{k+1}^3 q_k . \]
Here $a_{k+1}^3 q_k \le a_{k+1}^2 q_{k+1}$, and $q_k/q=q_k/q_r \le 1/F_{r-k+1}$, where $F_{r-k+1}$ are the Fibonacci numbers. Hence normalizing the previous formula leads to
\[ 5 \pi^3 \frac{D_2^2 (S(p/q,q))}{\log^2 Q} = \frac{\pi^3}{72 \log^2 Q} \sum_{k=1}^r a_k^2 + Z_2, \quad \textrm{where} \quad |Z_2| \le \frac{C}{\log^2 Q} \sum_{k=1}^r a_k + \frac{C}{\log^2 Q} \sum_{k=1}^r \frac{a_k^2}{F_{r-k+1}} .   \]
The first error term can be estimated in measure using formula \eqref{Bettin2} as
\[ \frac{1}{|F_Q|} \left| \left\{ \frac{p}{q} \in F_Q \, : \, \frac{1}{\log^2 Q} \sum_{k=1}^r a_k \ge \frac{1}{(\log Q)^{1/2}} \right\} \right| \ll \frac{1}{(\log Q)^{1/2}} . \]
Note that the map reversing the order of the partial quotients $F_Q \to F_Q$, $[0;a_1, a_2, \ldots, a_r] \mapsto [0; a_r, \ldots, a_2, a_1]$ is a bijection; in fact, $[0;a_r,\ldots, a_2,a_1]$ is the reduced fraction $q_{r-1}/q_r$. Therefore the distribution of $(a_r, \ldots, a_2, a_1)$ is identical to that of $(a_1, a_2, \ldots, a_r)$, and we can apply Lemma \ref{partialquotientlemma} to estimate the second error term in measure as
\[ \begin{split} \frac{1}{|F_Q|} \left| \left\{ \frac{p}{q} \in F_Q \, : \, \frac{1}{\log^2 Q} \sum_{k=1}^r \frac{a_k^2}{F_{r-k+1}} \ge \frac{1}{(\log Q)^{1/2}} \right\} \right| &= \frac{1}{|F_Q|} \left| \left\{ \frac{p}{q} \in F_Q \, : \, \sum_{k=1}^r \frac{a_k^2}{F_k} \ge (\log Q)^{3/2} \right\} \right| \\ &\le \frac{1}{|F_Q|} \sum_{k=1}^{\infty} \left| \left\{ \frac{p}{q} \in F_Q \, : \, \frac{a_k^2}{F_k} \ge (\log Q)^{3/2} \right\} \right| \\ &\le \frac{1}{|F_Q|} \sum_{k=1}^{\infty} \frac{2Q^2}{F_k^{1/2} (\log Q)^{3/4}} \\ &\ll \frac{1}{(\log Q)^{3/4}} . \end{split} \]
Note that we used the convention $a_k=0$ if $k>r$, and the fact that $|F(Q)| \gg Q^2$. In particular,
\[ \frac{1}{|F_Q|} \left| \left\{ \frac{p}{q} \in F_Q \, : \, |Z_2| \ge \frac{1}{(\log Q)^{1/2}} \right\} \right| \ll \frac{1}{(\log Q)^{1/2}} , \]
and the limit distribution theorem \eqref{Bettin1} of Bettin and Drappeau yields
\[ \sup_{t \ge 0} \left| \frac{1}{|F_Q|} \left| \left\{ \frac{p}{q} \in F_Q \, : \, 5 \pi^3 \frac{D_2^2 (S(p/q,q))}{\log^2 Q} \le t \right\} \right| - \int_0^t \frac{e^{-1/(2x)}}{\sqrt{2 \pi} x^{3/2}} \, \mathrm{d} x \right| \ll \frac{1}{(\log Q)^{1/2}} . \]
The error of replacing $\log^2 Q$ by $\log^2 q$ is easily seen to be negligible compared to $1/(\log Q)^{1/2}$.
\end{proof}

\subsection{Proof of Lemma \ref{BDlemma}}\label{section4.1}

We now deduce the rate $\ll 1/(\log Q)^{1-\varepsilon}$ in \eqref{Bettin1}. Fix $\varepsilon >0$. Applying the main result \cite[Theorem 1.1]{BD} of Bettin and Drappeau to, in their notation, $\phi(x)=\lfloor 1/x \rfloor^2$ with $\alpha_0=1/2-\varepsilon$, we conclude that there exist constants $t_0,\delta>0$ such that for all $|t| < t_0$,
\begin{equation}\label{charfunction}
\frac{1}{|F_Q|} \sum_{p/q \in F_Q} \exp \left( it \sum_{k=1}^r a_k^2 \right) = \exp \left( U(t) \log Q + O \left( |t|^{1/2-\varepsilon} + Q^{-\delta} \right) \right) ,
\end{equation}
where
\[ U(t) = \frac{12}{\pi^2} \int_{0}^{1} \frac{e^{it\lfloor 1/x \rfloor^2} -1}{1+x} \, \mathrm{d}x + O \left( |t|^{1-\varepsilon} \right) = \frac{12}{\pi^2} \int_{1}^{\infty} \frac{e^{it \lfloor x \rfloor^2}-1}{x^2+x} \, \mathrm{d}x + O \left( |t|^{1-\varepsilon} \right) . \]
Here $t_0, \delta$ and the implied constants depend only on $\varepsilon$.

Our improvement in \eqref{Bettin1} comes from a more careful estimate for $U(t)$. Assume that $0<t<t_0$. Since $|\lfloor x \rfloor^2 -x^2| \le 2x$, the error of removing the integer part function is negligible:
\[ \left| \int_{1}^{\infty} \frac{e^{i t \lfloor x \rfloor^2} - e^{i t x^2}}{x^2+x} \, \mathrm{d}x \right| \le \int_{1}^{\infty} \frac{\min \{ 2 t x , 2 \}}{x^2+x} \, \mathrm{d} x \ll t \log \frac{1}{t} . \]
Therefore
\[ U(t) = \frac{12}{\pi^2} \int_{1}^{\infty} \frac{e^{itx^2}-1}{x^2+x} \, \mathrm{d} x +O(t^{1-\varepsilon}) = \frac{12 \sqrt{t}}{\pi^2} \int_{\sqrt{t}}^{\infty} \frac{e^{ix^2}-1}{x^2+\sqrt{t}x} \, \mathrm{d} x +O(t^{1-\varepsilon}) . \]
We now compare the remaining integral to its limit, the Fresnel-type integral $\int_{0}^{\infty} (e^{ix^2-1})/x^2 \, \mathrm{d} x = (i-1) \sqrt{2 \pi}/2$. We have
\[ \begin{split} \left| \int_{\sqrt{t}}^{\infty} \frac{e^{ix^2}-1}{x^2+\sqrt{t}x} \, \mathrm{d} x - \int_{0}^{\infty} \frac{e^{ix^2}-1}{x^2} \, \mathrm{d} x \right| &\le \left| \int_{0}^{\sqrt{t}} \frac{e^{ix^2}-1}{x^2} \, \mathrm{d} x \right| + \int_{\sqrt{t}}^{\infty} |e^{ix^2}-1| \cdot \left| \frac{1}{x^2+\sqrt{t}x} - \frac{1}{x^2} \right| \, \mathrm{d} x \\ &\le \int_{0}^{\sqrt{t}} 1 \, \mathrm{d} x + \int_{\sqrt{t}}^{\infty} \min \{ x^2, 2 \} \frac{\sqrt{t}}{x^3} \, \mathrm{d} x \\ &\ll \sqrt{t} \log \frac{1}{t} , \end{split} \]
hence $U(t)= \frac{6 \sqrt{2} \sqrt{t}}{\pi^{3/2}} (i-1) + O(t^{1-\varepsilon})$. The case of negative $t$ follows from complex conjugation, thus for $|t|<t_0$,
\begin{equation}\label{Utestimate}
U(t) = - \frac{6 \sqrt{2} |t|^{1/2}}{\pi^{3/2}} (1- i \mathrm{sgn}(t)) + O(|t|^{1-\varepsilon}) .
\end{equation}

Now let
\[ \varphi_1(t)= \frac{1}{|F_Q|} \sum_{p/q \in F_Q} \exp \left( i t \frac{\pi^3}{72 \log^2 Q} \sum_{k=1}^r a_k^2 \right) \]
and $\varphi_2(t)=\exp (-|t|^{1/2}(1-i \mathrm{sgn}(t)))$; the latter is the characteristic function of the standard L\'evy distribution. The Berry--Esseen inequality \cite[p.\ 142]{PE} states that the distance of these two distributions in the Kolmogorov metric is, with any $T>0$,
\[ \sup_{t \ge 0} \left| \frac{1}{|F_Q|} \left| \left\{ \frac{p}{q} \in F_Q \, : \, \frac{\pi^3}{72 \log^2 Q} \sum_{k=1}^r a_k^2 \le t \right\} \right| - \int_0^t \frac{e^{-1/(2x)}}{\sqrt{2 \pi} x^{3/2}} \, \mathrm{d} x \right| \ll \frac{1}{T} + \int_{0}^{T} \frac{|\varphi_1(t)-\varphi_2(t)|}{t} \, \mathrm{d} t . \]
Choose $T = \log Q$. Formulas \eqref{charfunction} and \eqref{Utestimate} show that for $|t| \le \log Q$,
\[ \begin{split} \varphi_1(t) &= \varphi_2(t) \exp \left( O \left( \left( \frac{|t|}{\log^2 Q} \right)^{1-\varepsilon} \log Q + \left( \frac{|t|}{\log^2 Q} \right)^{1/2-\varepsilon} + Q^{-\delta} \right) \right) \\ &= \varphi_2(t) \left( 1 + O \left( \frac{|t|^{1-\varepsilon}+|t|^{1/2-\varepsilon}}{(\log Q)^{1-2 \varepsilon}} + Q^{-\delta} \right) \right) . \end{split} \]
Using $|\varphi_2(t)|= e^{-|t|^{1/2}}$, this immediately yields
\[ |\varphi_1(t)-\varphi_2(t)| \ll e^{-|t|^{1/2}} \left( \frac{|t|^{1-\varepsilon}+|t|^{1/2-\varepsilon}}{(\log Q)^{1-2 \varepsilon}} + Q^{-\delta} \right) . \]
It is now easy to see that
\[ \int_{Q^{-100}}^{1} \frac{|\varphi_1(t)-\varphi_2(t)|}{t} \, \mathrm{d} t \ll \frac{1}{(\log Q)^{1-2 \varepsilon}} \quad \textrm{and} \quad \int_{1}^{\log Q} \frac{|\varphi_1(t)-\varphi_2(t)|}{t} \, \mathrm{d} t \ll \frac{1}{(\log Q)^{1-2 \varepsilon}} . \]
On the other hand, by a very rough estimate we have $\sum_{k=1}^r a_k^2 \le Q^3$, hence $|\varphi_1(t)-1| \ll |t| Q^3$. Clearly $|\varphi_2(t)-1| \ll |t|^{1/2}$, thus
\[ \int_{0}^{Q^{-100}} \frac{|\varphi_1(t)-\varphi_2(t)|}{t} \, \mathrm{d} t \ll \int_{0}^{Q^{-100}} \frac{t Q^3 + t^{1/2}}{t} \, \mathrm{d} t \ll Q^{-50} . \]
Therefore
\[ \frac{1}{\log Q} + \int_{0}^{\log Q} \frac{|\varphi_1(t)-\varphi_2(t)|}{t} \, \mathrm{d} t \ll \frac{1}{(\log Q)^{1-2 \varepsilon}} , \]
as claimed.

\section*{Acknowledgments} The author is supported by the Austrian Science Fund (FWF), project F-5510. I would like to thank Sary Drappeau for helpful discussions on Lemma \ref{BDlemma}.

\end{document}